\documentclass{amsart}
\DeclareFontFamily{OML}{script}{}
\DeclareFontShape{OML}{script}{m}{it}
{ <5-20> rsfs10 }{}
\DeclareMathAlphabet{\mathscript}{OML}{script}{m}{it}

\renewcommand{\mathcal}[1]{{\mathscript #1}\hspace{0.2ex}}
\usepackage{color}
\ifx\red\undefined
\newcommand{\red}{\color{red}\huge}

\fi
\pagestyle{plain}
\usepackage[ansinew]{inputenc}
\usepackage[encapsulated]{CJK}


\usepackage{graphicx,graphics}
\usepackage{cite}
\usepackage{pifont}
\usepackage{amsthm}

\usepackage{float}
\usepackage{amscd}
\usepackage{amsmath}
\usepackage{latexsym}
\usepackage{amsfonts}
\usepackage{amssymb}
\usepackage{color}
\usepackage{multicol}
\usepackage{bm}
\allowdisplaybreaks[4]
\usepackage{url}

\textwidth=6.5in \textheight=9.38in
\oddsidemargin=0.0in
\evensidemargin=0.0in
\topmargin=-0.58in

\newcommand{\re}[1]{\mbox{\rm$($\ref{#1}$)$}}
\newcommand{\dis}{\displaystyle}
\newcommand{\m}{\hspace{1em}}
\newcommand{\mm}{\hspace{2em}}
\newcommand{\p}{\partial}

\newcommand{\Rmnum}[1]{\uppercase\expandafter{\romannumeral #1}}

\renewcommand{\epsilon}{\varepsilon}


\hfuzz=4mm \vfuzz=4mm
\hbadness=20000 \vbadness=20000


\ifx\text\undefined
\newcommand{\text}{\mbox}
\fi
\ifx\operatorname\undefined
\newcommand{\operatorname}{\mathop}
\fi

\newcommand\be{\begin{equation}}
\newcommand\ee{\end{equation}}
\newcommand\bea{\begin{eqnarray}}
\newcommand\eea{\end{eqnarray}}
\newcommand\beaa{\begin{eqnarray*}}
\newcommand\eeaa{\end{eqnarray*}}
\newcommand{\dif}{\mathrm{d}}

\newenvironment{eqa}{\begin{equation}%
  \begin{array}{rcl}}{\end{array}\end{equation}}

\newcommand\beqa{\begin{eqa}}
\newcommand\eeqa{\end{eqa}}

\numberwithin{equation}{section}

\renewcommand{\tilde}{\widetilde}

\newtheorem{thm}{Theorem}[section]

\newtheorem{lem}[thm]{Lemma}

\newtheorem{rem}{Remark}[section]

\newcommand{\void}[1]{}




\setlength\baselineskip{20pt}

\numberwithin{equation}{section}

\begin{document}
\title{On the first bifurcation point for a free boundary problem modeling small arterial plaque\footnote{\today}
}
\author{Xinyue Evelyn Zhao}\author{Bei Hu}
\address{Department of Applied and Computational Mathematics and Statistics, 
University of Notre Dame, Notre Dame, IN 46556, USA}
\email{xzhao6@nd.edu, b1hu@nd.edu}
\maketitle

\begin{abstract}
Atherosclerosis occurs when plaque clogs the arteries. It is a leading cause of death in the United States and worldwide. In this paper, we study the bifurcation for a highly nonlinear and highly coupled PDE model of plaque formation in the early stage of atherosclerosis. The model involves LDL and HDL cholesterols, macrophage cells as well as foam cells, with the interface separating the plaque and blood flow region being a free boundary. We establish the first bifurcation point for the system corresponding to $n=1$ mode. The symmetry-breaking stationary solution studied in this paper might be helpful in understanding why there exists arterial plaque that is often accumulated more on one side of the artery than the other.
\end{abstract}

\section{Introduction}
Atherosclerosis is the clogging of artery from the build-up of plaque, which is originated from a small one. In the process it causes the hardening of the arteries and induces heart attacks.  
Every year about 735,000 Americans have a heart attack, and about 610,000 people die of heart diseases in the United States --- that is 1 in every 4 deaths (cf., \cite{web4,web3}).

Several mathematical models which describe  this process have been developed and studied recently; see \cite{CEMR, CMT,FHplaque1,FHHplaque, FHplaque2, mckay2005towards, mukherjee2019reaction,zhao3} and references therein. All of these models incorporate the critical role of the ``bad'' cholesterols, low density lipoprotein (LDL), and the ``good'' cholesterols, high density lipoprotein (HDL), in determining whether plaque will grow or shrink. The more
sophisticated model \cite{FHplaque1,FHplaque2} includes 17 variables, while a simplified
model \cite{FHHplaque,zhao3} combines some of these variables. In order to carry out theoretical analysis, the simplified model is used in this paper.

The process of plaque formation is as follows: 
when a lesion develops in the inner surface of the arterial wall, LDL and HDL are allowed to move into the intima and then oxidized by free radicals. Oxidized LDL would trigger endothelial cells to secrete chemoattractant proteins that attract macrophage cells (M) from the blood, and macrophage cells
would engulf oxidized LDL to become foam cells (F). As foam cells accumulates in the artery, plaque gradually builds up. The effect of oxidized LDL on plaque growth can be reduced by HDL in two ways: (a) HDL can remove harmful bad cholesterol out from the foam cells and revert foam cells back into macrophage cells; and (b) HDL also competes with LDL on free radicals, decreasing the amount of radicals that are available to oxidize LDL.

In the present model, we let
\begin{equation*}
\begin{split}
&L\text{ = concentration of LDL,}\hspace{5em} H\text{ = concentration of HDL,}\\
&M\text{ = density of macrophage cells,}\hspace{2em} F\text{ = density of foam cells.}
\end{split}
\end{equation*}
It is a very good approximation in assuming that the artery is a very long circular cylinder with radius being 1 (after normalization). We consider a circular cross section of the artery, which is 2-space dimensional. As can be seen in Fig. \ref{plaque}, the cross section is divided into two regions: blood flow region $\Sigma(t)$ and plaque region $\Omega(t)$, with a moving boundary $\Gamma(t)$ separating these two regions. The variables $L,H,M,F$ satisfy the following equations in the plaque region $\{\Omega(t), t>0\}$ (cf., \cite[Chapters 7 and 8]{mathbio} and \cite{FHHplaque}):
\begin{gather}
\frac{\partial L}{\partial t} - \Delta L = - k_1 \frac{ML}{K_1 + L} - \rho_1 L,\label{p1}\\
\frac{\partial H}{\partial t} - \Delta H =- k_2 \frac{HF}{K_2 + F} - \rho_2 H,\label{p2}\\
\frac{\partial M}{\partial t} - D\Delta M + \nabla\cdot(M \vec{v}) = -k_1\frac{ML}{K_1+L} + k_2\frac{HF}{K_2+F} + \lambda\frac{ML}{\gamma+H}-\rho_3 M,\label{p3}\\
\frac{\partial F}{\partial t} - D\Delta F + \nabla\cdot(F \vec{v}) = k_1\frac{ML}{K_1+L}-k_2\frac{HF}{K_2+F}-\rho_4 F.\label{p4}
\end{gather}
The above system \re{p1} --- \re{p4} includes the aforementioned transitions between macrophage cells ($M$) and foam cells ($F$), and their relationship with $H$ and $L$. The extra term $\lambda\frac{ML}{\gamma+H}$ in equation \re{p3} is phenomenological: the factor $ML$ accounts for the formation of foam cells, while the inhibition factor $1/(\gamma+H)$ describes  the fact that by oxidizing with free radicals, $H$ removes some of the radicals that are available to oxidize $L$.

We assume that the density of cells in the plaque is approximately a constant, and take
\begin{equation}
    M+F \equiv M_0\hspace{2em}\text{in } \Omega(t).\label{p5}
\end{equation}
Due to cell migration into and out of the plaque, the total number of cells keeps changing. Based on \re{p5}, cells are continuously ``pushing'' each other, which gives rise to an internal pressure $p$ among the cells. The internal pressure $p$ is associated with the velocity $\vec{v}$ in \re{p3} and \re{p4}. Since we treat the plaque as porous medium, we take the Darcy's law
\begin{equation}
    \vec{v} = -\nabla p \mm (\mbox{the proportional constant is normalized to 1}),\label{p6}
\end{equation}
Combining \re{p3} -- \re{p6}, we derive
\begin{equation}
    -\Delta p = \frac{1}{M_0}\Big[\lambda\frac{(M_0-F)L}{\gamma+H}-\rho_3(M_0-F) - \rho_4 F\Big].\label{peqn}
\end{equation}
Using the assumption \re{p5}, we can replace $M$ by $M_0 - F$ in \re{p1} -- \re{p4}, hence the model only consists of 4 PDEs, for $L$, $H$, $F$, and $p$, respectively. In particular, based on \re{peqn}, the equation for $F$ is
\begin{equation}
    \label{Feqn}
    \frac{\p F}{\p t}-D\Delta F - \nabla F\cdot \nabla p = k_1\frac{(M_0-F)L}{K_1+L}-k_2\frac{HF}{K_2+F}-\lambda\frac{F(M_0-F)L}{M_0(\gamma+H)}+(\rho_3-\rho_4)\frac{(M_0-F)F}{M_0}.
\end{equation}

In terms of boundary conditions, we assume no-flux condition on the blood vessel wall ($r=1$) for all variables (no exchange through the blood vessel): 
\begin{equation}
\frac{\partial L}{\partial r} = \frac{\partial H}{\partial r} = \frac{\partial F}{\partial r} =  \frac{\partial p}{\partial r} = 0 \hspace{2em}\text{at } r=1;\label{p7}
\end{equation}
while on the free boundary $\Gamma(t)$, we use the Robin boundary conditions:
\begin{eqnarray}
&\frac{\partial L}{\partial {\bf n}} + \beta_1 (L-L_0) = 0\hspace{2em} &\text{on }\Gamma(t),\label{p8}\\
&\frac{\partial H}{\partial {\bf n}}+\beta_1 (H-H_0) = 0\hspace{2em}&\text{on }\Gamma(t),\label{p9}\\
&\frac{\partial F}{\partial {\bf n}}+\beta_2 F = 0\hspace{2em}&\text{on }\Gamma(t),\label{p11}\\
&p = \kappa \hspace{2em} &\text{on }\Gamma(t),\label{p12}
\end{eqnarray}
where ${\bf n}$ denotes the outward unit normal vector for $\Gamma(t)$ which points inward to the blood region (as shown in Fig. \ref{plaque}), and $\kappa$ is the corresponding mean curvature in the direction of ${\bf n}$ (i.e., $\kappa=-\frac{1}{R(t)}$ if $\Gamma(t)=\{r=R(t)\}$). The cell-to-cell adhesiveness constant in front of $\kappa$ in \re{p12} is normalized to 1. The boundary conditions \re{p8} and \re{p9} are based upon the fact that the concentrations of $L$ and $H$ in the blood are $L_0$ and $H_0$, respectively; and the meaning of \re{p11} is similar: there are, of course, no foam cells in the blood.

Finally, we assume that the velocity is continuous up to the boundary, so that the free boundary $\Gamma(t)$ moves in the direction of ${\bf n}$ with velocity $\vec{v}$; on the foundation of \re{p6}, the normal velocity of the free boundary is defined by
\begin{equation}
    \label{p13}
    V_n = -\frac{\partial p}{\partial {\bf n}}\hspace{2em}\text{on }\Gamma(t).
\end{equation}

\vspace{10pt}

\begin{multicols}{2}
\begin{figure}[H]
\centering
\includegraphics[height=1.5in]{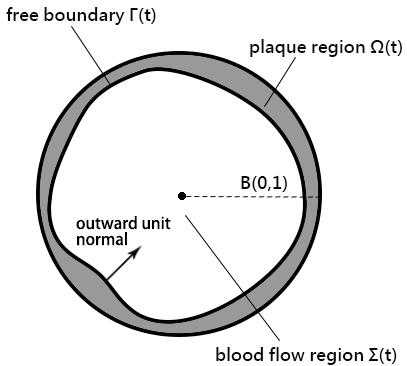}
\caption{\small The cross section of an artery.}
\label{plaque}
\end{figure}

\begin{figure}[H]
\centering
\includegraphics[height=1.5in]{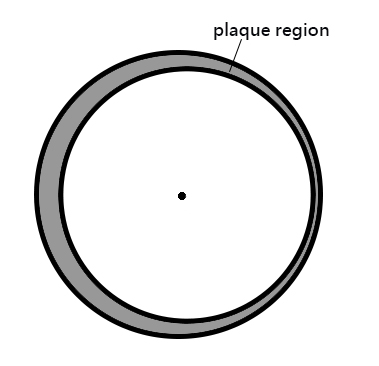}
\caption{\small A symmetry-breaking solution in the $n=1$ bifurcation branch}
\label{plaque-b}
\end{figure}
\end{multicols}

For the system \re{p1} -- \re{p13}, Friedman et al. \cite{FHHplaque} studied the radially symmetric case and justified the existence of a unique radially symmetric steady state solution in a small ring-region $1-\epsilon \le r\le 1$, which describes the arterial plaque in the early stage of atherosclerosis. Later on Zhao and Hu \cite{zhao3} carried out the bifurcation analysis for the system and established a branch of bifurcation points for $n\ge 2$. More specifically, they used $\mu= \frac1\epsilon[\lambda L_0-\rho_3(\gamma+H_0)]$ as the bifurcation parameter, and showed that for $n\ge2$, there exists a unique 
\begin{equation}
    \label{bifurpt}
    \mu_n = (\gamma + H_0) n^2 (1-n^2) + O(n^5 \epsilon),
\end{equation}
if $\mu_n > \mu_c$ ($\mu_c$ is defined in equation (2.9) in \cite{zhao3}), then $\mu = \mu_n$ is a bifurcation point of the symmetry-breaking stationary solution when $\epsilon$ is small enough. 

Based on further exploration and refinements of the estimates in \cite{zhao3}, we shall prove in this paper that $\mu_1 = O(\epsilon)$ is also a bifurcation point for the system \re{p1} -- \re{p13} by verifying the Crandall-Rabinowitz Theorem (Theorem \ref{CRthm}). Our main result is stated in the following theorem.

\begin{thm}\label{main}
Assume that $\mu_c<0$ and $\beta_1\neq \beta_2$. For $\mu_1 = O(\epsilon)$ defined as the solution to the equation \re{mun} with $n=1$, we can find a small $E>0$, such that for $0<\epsilon<E$, 
then $\mu=\mu_1$ is a bifurcation point of the symmetry-breaking stationary solution for the system \re{p1} -- \re{p13}. Moreover, the free boundary of this bifurcation solution is of the form
\begin{equation}
    r = 1-\epsilon + \tau \cos(\theta) + o(\tau),\mm \text{where } |\tau| \ll \epsilon.
\end{equation}
\end{thm}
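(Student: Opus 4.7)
My plan is to apply the Crandall--Rabinowitz theorem (Theorem \ref{CRthm}) exactly as in \cite{zhao3}, but with every relevant quantity expanded to one additional order in $\epsilon$ so as to resolve the degenerate mode $n=1$. Writing the perturbed free boundary as $\Gamma = \{r = 1-\epsilon + R(\theta)\}$ with $R$ small and even in $\theta$, I would first solve the stationary versions of \re{p1}, \re{p2}, \re{p11} and the pressure equation \re{peqn} on the resulting annular domain, and then define the nonlinear operator
\begin{equation*}
\mathcal F(\mu,R) := p\big|_{\Gamma} - \kappa,
\end{equation*}
whose zeros correspond precisely to stationary solutions of \re{p1}--\re{p13}. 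By rotational invariance, the Fr\'echet derivative $\mathcal F_R(\mu,0)$ is diagonal in the Fourier basis $\{\cos n\theta\}$; denoting its eigenvalues by $\Lambda_n(\mu,\epsilon)$, the scalar equation $\Lambda_n(\mu,\epsilon)=0$ is exactly \re{mun}, and $\mu_1$ is its unique small root at $n=1$.

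\textbf{Verification of the Crandall--Rabinowitz hypotheses.} Once $\mu_1=O(\epsilon)$ is located, the four standard hypotheses must be checked: (i) Fredholmness of index zero of $\mathcal F_R(\mu_1,0)$ between the H\"older spaces used in \cite{zhao3} is inherited from that paper with no change, as this argument is uniform in $n$; (ii) one-dimensionality of the kernel requires $\Lambda_n(\mu_1,\epsilon)\ne 0$ for $n\ne 1$, which for $n\ge 2$ follows from \re{bifurpt} together with the separation $\mu_1 = O(\epsilon)$ versus $\mu_n = O(n^2)$ for small $\epsilon$, and for $n=0$ from the radial calculation already contained in \cite{zhao3}; (iii) the codimension-one range condition is automatic by the Fredholm alternative; (iv) transversality, $\partial_\mu \Lambda_1(\mu_1,\epsilon)\ne 0$, reduces to showing that $\mu_1$ is a simple root of $\Lambda_1(\cdot,\epsilon)$, which is a short computation once the leading expansion of $\Lambda_1$ is available.

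\textbf{The main obstacle: refining the expansion at $n=1$.} The real content of the proof is the derivation of $\Lambda_1(\mu,\epsilon)$ to sufficient accuracy to identify its nontrivial small zero. For $n\ge 2$ the leading term of $\Lambda_n$ has size $O(1)$ and equals $(\gamma+H_0)n^2(1-n^2)\ne 0$, so that \re{bifurpt} is obtained from a leading-order balance. At $n=1$ this coefficient vanishes identically, hence $\Lambda_1$ starts at order $\epsilon$, and every ingredient---the radial steady state $(L_s,H_s,F_s,p_s)$, the mode-$1$ corrections to each of $L,H,F$, and the trace of $p$ on $\Gamma$---must be tracked one order beyond the $n\ge 2$ analysis of \cite{zhao3}. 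The decisive cancellations that force the $O(1)$ coefficient to vanish leave a residual $O(\epsilon)$ coefficient whose nonvanishing is controlled by $\beta_1-\beta_2$; this is precisely why the hypothesis $\beta_1\ne\beta_2$ is imposed. The role of $\mu_c < 0$ is to ensure that $\mu_1=O(\epsilon)$ lies in the admissible range where the underlying radial solution exists and is separated from the radial critical value.

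\textbf{Conclusion.} With $\Lambda_1$ expanded to order $\epsilon$ and shown to admit a simple zero $\mu_1 = O(\epsilon)$ depending smoothly on $\epsilon$ under $\beta_1\ne\beta_2$ and $\mu_c<0$, the Crandall--Rabinowitz theorem applies and produces a smooth bifurcation branch emanating from $(\mu_1,0)$. The branch direction is determined by the kernel element $\cos\theta$, yielding the claimed free-boundary representation $r = 1-\epsilon+\tau\cos\theta + o(\tau)$ in the regime $|\tau|\ll\epsilon$; the restriction on $\tau$ is the validity regime of the small-perturbation expansion used to define $\mathcal F$.
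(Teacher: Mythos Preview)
Your overall strategy---Crandall--Rabinowitz, Fourier diagonalization, and a refined $\epsilon$-expansion---matches the paper. But there is a genuine gap, and it sits exactly at the step you wave away.

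You write that one-dimensionality of the kernel requires $\Lambda_n(\mu_1,\epsilon)\ne 0$ for $n\ne 1$, and that ``for $n=0$ [this follows] from the radial calculation already contained in \cite{zhao3}.'' This is precisely what \cite{zhao3} could \emph{not} establish, and it is the entire point of the present paper. Formula \re{bifurpt} gives $\mu_0=(\gamma+H_0)\cdot 0\cdot 1+O(\epsilon)=O(\epsilon)$ and $\mu_1=(\gamma+H_0)\cdot 1\cdot 0+O(\epsilon)=O(\epsilon)$, so both roots are $O(\epsilon)$ and nothing in \cite{zhao3} separates them. The new work here is to push the expansions of $L_1^n,H_1^n,F_1^n$ one order further (Lemmas \ref{eqn-n2} and \ref{eqn-n4}), compute the differences $L_{11}^1-L_{11}^0$, $H_{11}^1-H_{11}^0$, $F_{11}^1-F_{11}^0$ explicitly (equations \re{DL1n}--\re{DF11n}), and feed these into $f_8$ to obtain
\[
\frac{\partial p_1^1(1-\epsilon)}{\partial r}-\frac{\partial p_1^0(1-\epsilon)}{\partial r}
=\epsilon^2\Big(\frac{1}{\beta_1}-\frac{1}{\beta_2}\Big)\frac{\rho_4 F_*^1}{M_0}+O(\epsilon^3),
\]
which is nonzero exactly when $\beta_1\ne\beta_2$. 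This is where the hypothesis $\beta_1\ne\beta_2$ actually enters: it governs the separation $\mu_0\ne\mu_1$, not (as you suggest) the nonvanishing of the $O(\epsilon)$ coefficient of $\Lambda_1$ or the simplicity of its root.

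A secondary remark: the paper defines $\mathcal F$ via the normal velocity $-\partial p/\partial\mathbf n|_{\Gamma_\tau}$, imposing $p=\kappa$ as a boundary condition in the auxiliary system, rather than the reverse as you do. This is harmless, but your formulation should be made consistent with whichever condition you choose to impose when solving for $(L,H,F,p)$.
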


In recent years, considerable research works have been done on bifurcation analysis based upon the Crandall-Rabinowitz Theorem (see \cite{CE3, FFBessel, FH3, FR2, Hao1, Hao2, angio1, Fengjie, Hongjing, Zejia, W, WZ2,zhao2,song}). In these works, the solutions on the $n=1$ bifurcation branch are just $\epsilon$-translations of the origin of the radially symmetric solution; after transformation of coordinates, this kind of solutions are still radially symmetric. Therefore, the $n=1$ case is always ignored in previous works on the bifurcation analysis. For our problem, however, the circumstances are {\em very different.} There are two boundaries for the system \re{p1} -- \re{p13}, with the outer boundary $r=1$ being fixed. Since the outer boundary is fixed, all the perturbations make changes only on the inner free boundary. Due to this special geometry, the solutions on the $n=1$ bifurcation branch are non-radially symmetric, as shown in the Figure~\ref{plaque-b}.

We want to emphasise that, for technical reasons, Theorem \ref{main} was not established in \cite{zhao3}. The primary reason is that, by \re{bifurpt}, both $\mu_0$ and $\mu_1$ are of order $O(\epsilon)$; thus \re{bifurpt} is not enough to guarantee  $\mu_0 \neq \mu_1$, and hence the assumption (2) of the Crandall-Rabinowitz Theorem cannot be verified. The main goal of this paper is to derive more sophisticated estimates for $\mu_0$ and $\mu_1$, with which we can then verify the Crandall-Rabinowitz Theorem.

From mathematical point of view, $\mu = \mu_1 = O(\epsilon)$ is the first bifurcation point, which often coincides with the change of stability for the system. In fact, it has been proved in \cite{FHHplaque} that the radially symmetric plaque would disappear if $\mu < 0$, and remain persistent if $\mu > 0$; hence it is likely that the stability of the radially symmetric solution would change around $\mu = 0$. More importantly, $\mu = \mu_1 = O(\epsilon)$ is also the most significant bifurcation point biologically, as the arterial plaque is often accumulated more on one side of the artery in reality (see Figures in \cite{plaque-fig1,plaque-fig2,plaque-fig3}).

The structure of this paper is as follows. We collect some preliminaries in Section 2, and give some useful estimates  in Section 3. The proof to the main result, Theorem \ref{main}, is presented in Section 4.

\section{Preliminaries}
\subsection{A small radially symmetric stationary solution}
We denote the radially symmetric stationary solution of the system \re{p1} -- \re{p13} by $(L_*, H_*, F_*, p_*)$. Dropping all the time derivatives, and writing the system in polar coordinates in the domain $\Omega_*=\{1-\epsilon<r<1\}$, we obtain the equation for $(L_*,H_*,F_*,p_*)$:
\begin{eqnarray}
&- \Delta L_* = - k_1 \frac{(M_0-F_*)L_*}{K_1 + L_*} - \rho_1 L_* &\text{in }\Omega_*,\label{r1}\\
&- \Delta H_* =- k_2 \frac{H_*F_*}{K_2 + F_*} - \rho_2 H_* &\text{in }\Omega_*,\label{r2}\\
&\label{r3}
    -D\Delta F_* - \frac{\p F_*}{\p r}\frac{\p p_*}{\p r}= k_1\frac{(M_0-F_*)L_*}{K_1+L_*}-k_2\frac{H_*F_*}{K_2+F_*}-\lambda\frac{F_*(M_0-F_*)L_*}{M_0(\gamma+H_*)}+(\rho_3-\rho_4)\frac{(M_0-F_*)F_*}{M_0} &\text{in }\Omega_*,\\
&-\Delta p_* = \frac{1}{M_0}\Big[\lambda\frac{(M_0-F_*)L_*}{\gamma+H_*}-\rho_3(M_0-F_*) - \rho_4 F_*\Big]\hspace{1em} &\text{in }\Omega_*,\label{r4}\\
&\frac{\partial L_*}{\partial r} = \frac{\partial H_*}{\partial r} = \frac{\partial F_*}{\partial r} =  \frac{\partial p_*}{\partial r} = 0, \hspace{2em}& r=1,\label{r5}\\
&\m -\frac{\partial L_*}{\partial r} + \beta_1 (L_*-L_0) = 0, \hspace{1em}  -\frac{\partial H_*}{\partial r} + \beta_1 (H_*-H_0)=0, \hspace{1em} -\frac{\partial F_*}{\partial r} + \beta_2 F_* = 0,\hspace{1em}&r=1-\epsilon,\label{r6}\\
&p_* = -\frac{1}{1-\epsilon},\hspace{1em}  &r=1-\epsilon,\label{r7}\\
&  \frac{\partial p_*}{\partial r}=0, &r=1-\epsilon.\label{r7a}
\end{eqnarray}

There are many parameters in the system. As in \cite{zhao3}, we keep all parameters fixed except $L_0$ and $\rho_4$.
For convenience, rather than using $L_0$, we use  $\mu = \frac{1}{\epsilon}[\lambda L_0 - \rho_3(\gamma+H_0)]$ as our bifurcation parameter and let $\rho_4 = \rho_4(\mu)$. 
The existence theorem for the radially symmetric solution has been established in \cite{zhao3}, and is stated as follows:
\begin{thm}
Define
$\mu_c = \frac{ \rho_3}{\beta_1}\Big\{  
 (\gamma + H_0) \Big( \frac{\lambda k_1 M_0  }{\lambda K_1+\rho_3(\gamma+H_0)}+ {\rho_1 } \Big) -\rho_2 H_0 \Big\}$,
for every $\mu_* > \mu_c$ and $\mu_c<\mu<\mu_*$, we can find a small $\epsilon^* > 0$, and for each $0<\epsilon < \epsilon^*$, there exists a unique $\rho_4$ such that the system \re{r1} -- \re{r7a} admits a unique solution $(L_*, H_*, F_*, p_*)$.
\end{thm}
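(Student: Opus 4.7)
The plan is to reduce the overdetermined free-boundary ODE system on the thin annulus to a solvability condition that determines $\rho_4$. Introduce the stretched radial coordinate $z=(1-r)/\epsilon$ so that the annulus $\{1-\epsilon<r<1\}$ becomes the fixed interval $\{0<z<1\}$. In these coordinates $\Delta=\frac{1}{\epsilon^2}\partial_z^2-\frac{1}{\epsilon(1-\epsilon z)}\partial_z$, so after multiplying the bulk equations by $\epsilon^2$ every reaction term appears as an $O(\epsilon^2)$ perturbation of $-\partial_z^2$. The Neumann condition at $r=1$ becomes $\partial_z L_*=\partial_z H_*=\partial_z F_*=\partial_z p_*=0$ at $z=0$, while the Robin conditions at $r=1-\epsilon$ take the form $\partial_z L_*+\epsilon\beta_1(L_*-L_0)=0$, and similarly for $H_*$ and $F_*$; the pressure has the extra conditions $p_*=-1/(1-\epsilon)$ and $\partial_z p_*=0$ at $z=1$.

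Next I would solve the elliptic sub-system for $(L_*,H_*,F_*)$ while viewing $\rho_4$ as a free parameter ranging over a bounded interval. For fixed $\rho_4$, expand each unknown as $L_*=L^{(0)}+\epsilon L^{(1)}+\cdots$, and similarly for $H_*,F_*$. The $O(1)$ problem is $\partial_z^2 L^{(0)}=0$ with homogeneous Neumann data at both ends, yielding constants; matching at the $\epsilon$-level against the Robin condition forces $L^{(0)}=L_0$, $H^{(0)}=H_0$, $F^{(0)}=0$. With these leading profiles in hand, a standard Banach contraction mapping argument on $C^{2,\alpha}([0,1])$, whose contractivity is quantified by powers of $\epsilon$, gives a unique smooth solution $(L_*,H_*,F_*)=(L_0,H_0,0)+O(\epsilon)$ depending smoothly on $\rho_4$; moreover explicit expansions for $L^{(1)},H^{(1)},F^{(1)}$ can be read off.

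The pressure $p_*$ solves $-\Delta p_*=g(L_*,H_*,F_*;\rho_4)$ with two Neumann conditions (at $z=0$ and $z=1$) plus a Dirichlet condition at $z=1$. This is overdetermined; integrating the equation over $\Omega_*$ and using both Neumann conditions yields the compatibility condition
\begin{equation*}
\Phi(\rho_4;\mu,\epsilon):=\int_{1-\epsilon}^{1}\!\Big[\lambda\frac{(M_0-F_*)L_*}{\gamma+H_*}-\rho_3(M_0-F_*)-\rho_4 F_*\Big] r\,dr=0.
\end{equation*}
Using $\lambda L_0=\rho_3(\gamma+H_0)+\epsilon\mu$ and the expansions above, the integrand is $O(\epsilon)$, so dividing by $\epsilon$ gives $\Phi/\epsilon=\mathcal{F}(\rho_4,\mu)+O(\epsilon)$ where $\mathcal{F}$ is an explicit function computed from the first-order correctors $L^{(1)},H^{(1)},F^{(1)}$. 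After the computation $\mathcal{F}$ turns out to be affine (and strictly monotone) in $\rho_4$, so $\partial_{\rho_4}\Phi\ne 0$ uniformly in the range $\mu_c<\mu<\mu_*$; the precise threshold $\mu_c$ in the statement is exactly the value at which the resulting $\rho_4$ ceases to be admissible (its sign/positivity or an a priori bound coming from $\beta_1$ fails). The implicit function theorem then produces a unique $\rho_4=\rho_4(\mu,\epsilon)$ on the stated interval for $\epsilon<\epsilon^*$, and with this choice the pressure problem is consistent and the Dirichlet datum $p_*=-1/(1-\epsilon)$ pins down $p_*$ uniquely.

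The main obstacle is Step 3: extracting the leading-order formula for $\Phi/\epsilon$ requires keeping careful track of the order-$\epsilon$ corrections to all three of $L_*,H_*,F_*$ (since the $O(1)$ term of the integrand vanishes by the definition of $\mu$), and verifying that the coefficient of $\rho_4$ in $\mathcal{F}$ is nonzero precisely in the right regime. The explicit algebraic form of $\mu_c$ involving $\beta_1,\beta_2,\rho_1,\rho_2,\rho_3,\lambda,k_1,K_1,M_0,\gamma,H_0$ should emerge from this computation and then be compared against the integrability threshold to confirm the interval of existence. Because this is precisely the analysis carried out in \cite{zhao3}, we only need to cite that reference once the reduction is in place.
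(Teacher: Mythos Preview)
The paper does not prove this theorem; it is quoted as an existing result from \cite{zhao3}, introduced by the sentence ``The existence theorem for the radially symmetric solution has been established in \cite{zhao3}, and is stated as follows.'' There is therefore no proof in the present paper to compare against, and your own final line---that one should simply cite \cite{zhao3}---is exactly what the paper does.

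That said, your sketch is consistent with the pieces of \cite{zhao3} that the paper \emph{does} import in Section~3. The expansions (3.1)--(3.6) confirm the leading profiles $(L_*,H_*,F_*)=(L_0,H_0,0)+O(\epsilon)$ that your stretched-variable argument predicts, and Lemma~\ref{rho-relation} is precisely the leading-order version of your integral compatibility condition $\Phi=0$: plugging (3.1)--(3.3) into (3.8) gives $\lambda L_*^1-\rho_3 H_*^1=\mu-\mu_c$, hence $\rho_4\approx M_0(\mu-\mu_c)/[F_*^1(\gamma+H_0)]$, which is positive exactly for $\mu>\mu_c$ and matches your reading of the threshold. One small point to tighten if you ever write this out in full: equation \re{r3} for $F_*$ contains both the advection term $\partial_r F_*\,\partial_r p_*$ and the factor $(\rho_3-\rho_4)$, so $(L_*,H_*,F_*)$ is not literally decoupled from $p_*$ or from $\rho_4$ as your Step~2 suggests; these couplings are, however, of higher order in $\epsilon$ (since $F_*=O(\epsilon)$ and $\partial_r p_*=O(\epsilon)$), so they are absorbed by the contraction and do not affect the leading coefficient $\partial_{\rho_4}\Phi$.
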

\begin{rem}
By ODE theories, the solution $(L_*, H_*, F_*, p_*)$ can be extended to the bigger region $\Omega_{2\epsilon}=\{1-2\epsilon <r <1\}$ while maintaining $C^\infty$ regularity. For notational convenience, we still use $(L_*, H_*, F_*, p_*)$ to denote the extended solution. 
\end{rem}

\subsection{Bifurcation theorem}
Next we state a useful theorem which is  critical in studying bifurcations.
\begin{thm}\label{CRthm}  {\bf (Crandall-Rabinowitz theorem, \cite{crandall})}
Let $X$, $Y$ be real Banach spaces and $\mathcal{F}(\cdot,\cdot)$ a $C^p$ map, $p\ge 3$, of a neighborhood $(0,\mu_0)$ in $X \times \mathbb{R}$ into $Y$. Suppose
\begin{itemize}
\item[(1)] $\mathcal{F}(0,\mu) = 0$ for all $\mu$ in a neighborhood of $\mu_0$,
\item[(2)] $\mathrm{Ker} \,\mathcal{F}_x(0,\mu_0)$ is one dimensional space, spanned by $x_0$,
\item[(3)] $\mathrm{Im} \,\mathcal{F}_x(0,\mu_0)=Y_1$ has codimension 1,
\item[(4)] $\mathcal{F}_{\mu x}(0,\mu_0) x_0 \notin Y_1$.
\end{itemize}
Then $(0,\mu_0)$ is a bifurcation point of the equation $\mathcal{F}(x,\mu)=0$ in the following sense: In a neighborhood of $(0,\mu_0)$ the set of solutions $\mathcal{F}(x,\mu) =0$ consists of two $C^{p-2}$ smooth curves $\Gamma_1$ and $\Gamma_2$ which intersect only at the point $(0,\mu_0)$; $\Gamma_1$ is the curve $(0,\mu)$ and $\Gamma_2$ can be parameterized as follows:
$$\Gamma_2: (x(\epsilon),\mu(\epsilon)), |\epsilon| \text{ small, } (x(0),\mu(0))=(0,\mu_0),\; x'(0)=x_0.$$
\end{thm}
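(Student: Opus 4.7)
The plan is to carry out a Lyapunov--Schmidt reduction that converts the abstract equation $\mathcal{F}(x,\mu)=0$ into a scalar equation amenable to the implicit function theorem. First I would fix a closed complement $X_1$ of $\mathrm{span}\{x_0\}$ in $X$ (which exists because $\ker\mathcal{F}_x(0,\mu_0)$ is one-dimensional, hence finite-dimensional) and a continuous projection $Q:Y\to Y_1$ onto the closed, codimension-one range (with $Z:=(I-Q)Y\cong\mathbb{R}$). Writing every $x\in X$ uniquely as $x=s x_0+\xi$ with $s\in\mathbb{R}$ and $\xi\in X_1$, the equation $\mathcal{F}(s x_0+\xi,\mu)=0$ decouples into the \emph{range equation} $Q\mathcal{F}(s x_0+\xi,\mu)=0$ and the \emph{kernel equation} $(I-Q)\mathcal{F}(s x_0+\xi,\mu)=0$.

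Next, view $G(\xi,s,\mu):=Q\mathcal{F}(s x_0+\xi,\mu)$ as a $C^p$ map into $Y_1$. Its $\xi$-derivative at $(0,0,\mu_0)$ is the restriction $Q\mathcal{F}_x(0,\mu_0)|_{X_1}:X_1\to Y_1$, which is a linear homeomorphism by hypotheses (2) and (3). The implicit function theorem therefore produces a $C^p$ solution $\xi=\xi(s,\mu)$ defined on a neighborhood of $(0,\mu_0)$, with $\xi(0,\mu_0)=0$. Hypothesis (1) together with the local uniqueness in IFT forces $\xi(0,\mu)\equiv 0$. Substituting back, the kernel equation becomes a scalar map
\[
\Phi(s,\mu):=(I-Q)\mathcal{F}\bigl(s x_0+\xi(s,\mu),\mu\bigr)\in Z,
\]
and $\Phi(0,\mu)\equiv 0$. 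A standard fundamental-theorem-of-calculus argument then factors $\Phi(s,\mu)=s\,\Psi(s,\mu)$ with $\Psi$ of class $C^{p-1}$ and $\Psi(0,\mu)=\partial_s\Phi(0,\mu)$.

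The final step is to apply the implicit function theorem to $\Psi(s,\mu)=0$. Chain-rule computation at $s=0$ gives $\Psi(0,\mu_0)=(I-Q)\mathcal{F}_x(0,\mu_0)x_0=0$, since $x_0\in\ker\mathcal{F}_x(0,\mu_0)$. Differentiating $\Psi(0,\mu)=\partial_s\Phi(0,\mu)$ in $\mu$ at $\mu_0$, and using $\partial_\mu\xi(0,\mu_0)=0$ (obtained from $\xi(0,\mu)\equiv 0$), I get
\[
\partial_\mu\Psi(0,\mu_0)=(I-Q)\mathcal{F}_{\mu x}(0,\mu_0)x_0,
\]
which is nonzero exactly by the transversality hypothesis (4). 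IFT then yields a $C^{p-2}$ curve $\mu=\mu(s)$ with $\mu(0)=\mu_0$ solving $\Psi(s,\mu)=0$. The bifurcating branch
\[
\Gamma_2=\bigl\{(s x_0+\xi(s,\mu(s)),\,\mu(s)):|s|\text{ small}\bigr\}
\]
has tangent $x_0$ at $s=0$, and the factorization $\Phi=s\Psi$ shows that $\Gamma_1=\{(0,\mu)\}$ and $\Gamma_2$ exhaust the zero set near $(0,\mu_0)$ and meet only at that point.

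The main technical obstacle is the bookkeeping of the factorization $\Phi(s,\mu)=s\Psi(s,\mu)$ and the careful differentiation of the implicit relation $\xi(s,\mu)$ so that the abstract transversality condition (4) translates into the precise nondegeneracy $\partial_\mu\Psi(0,\mu_0)\neq 0$ required by the second invocation of IFT. Verifying the exact loss of two derivatives (so $\Gamma_2$ is only $C^{p-2}$ even though $\mathcal{F}$ is $C^p$) is what dictates the hypothesis $p\ge 3$.
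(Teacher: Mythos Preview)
The paper does not supply its own proof of this statement: Theorem~\ref{CRthm} is quoted verbatim from the literature (Crandall and Rabinowitz, \cite{crandall}) and used as a black box in the proof of Theorem~\ref{main}. So there is no ``paper's own proof'' to compare against.

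That said, your outline is the classical Lyapunov--Schmidt argument, and it is essentially correct and matches the strategy of the original Crandall--Rabinowitz paper. The logical flow---complement the kernel and the range, solve the range equation for $\xi(s,\mu)$ via the implicit function theorem, factor out $s$ from the reduced scalar equation, then apply IFT a second time using the transversality hypothesis (4)---is exactly the standard route. One small point worth double-checking is the precise regularity count: with $\mathcal{F}\in C^p$, the implicit function $\xi$ is $C^p$, the reduced map $\Phi$ is $C^p$, and the Hadamard-type division $\Psi(s,\mu)=\int_0^1\partial_s\Phi(ts,\mu)\,\mathrm{d}t$ is $C^{p-1}$; a naive application of IFT to $\Psi$ then yields $\mu(s)\in C^{p-1}$, not $C^{p-2}$. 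The $C^{p-2}$ figure stated in the theorem (and in your final paragraph) is a conservative version that appears in some secondary sources; if you intend to match the statement exactly you should track where, if anywhere, a second derivative is genuinely lost, or else note that the sharper $C^{p-1}$ conclusion actually holds.
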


\subsection{Preparations for the bifurcation theorem}
In order to tackle the existence of symmetry-breaking stationary solutions to system \re{p1} -- \re{p13}, we'd like to apply the Crandall-Rabinowitz theorem. The preparations are the same as in \cite{zhao3}, and are similar as those in \cite{CE3, FFBessel, FH3, FR2, Hao1, Hao2, angio1, Fengjie, Hongjing, Zejia, W, WZ2,zhao2,song}.

We consider a family of perturbed domains $\Omega_\tau=\{1-\epsilon+ \tilde{R} < r < 1\}$ and denote the corresponding inner boundary to be $\Gamma_\tau$, where $\tilde{R}=\tau S(\theta)$, $|\tau| \ll \epsilon$ and $|S|\le 1$. Let $(L,H,F,p)$ be the solution of the system:
\begin{eqnarray} 
&- \Delta L = - k_1 \frac{(M_0-F)L}{K_1 + L} - \rho_1 L\hspace{1em} &\text{in } \Omega_\tau,\label{b1}\\
&- \Delta H =- k_2 \frac{HF}{K_2 + F} - \rho_2 H\hspace{1em} &\text{in } \Omega_\tau,\label{b2}\\
&-D\Delta F - \nabla F\cdot \nabla p = k_1\frac{(M_0-F)L}{K_1+L}-k_2\frac{HF}{K_2+F}-\lambda\frac{F(M_0-F)L}{M_0(\gamma+H)}+(\rho_3-\rho_4)\frac{(M_0-F)F}{M_0}\hspace{1em} &\text{in }\Omega_\tau,\label{b3}\\
&-\Delta p = \frac{1}{M_0}\Big[\lambda\frac{(M_0-F)L}{\gamma+H}-\rho_3(M_0-F) - \rho_4 F\Big]\hspace{1em} &\text{in }\Omega_\tau,\label{b4}\\
&\frac{\partial L}{\partial r} = \frac{\partial H}{\partial r} = \frac{\partial F}{\partial r} =  \frac{\partial p}{\partial r} = 0, \hspace{2em}& r=1,\label{b5}\\
&\frac{\partial L}{\partial {\bf n}} + \beta_1 (L-L_0) = 0, \hspace{1em}  \frac{\partial H}{\partial {\bf n}} + {\beta_1}(H-H_0)=0, \hspace{1em} \frac{\partial F}{\partial {\bf n}} + \beta_2 F = 0&\text{on } \Gamma_\tau,\label{b6}\\
&p = \kappa &\text{on } \Gamma_\tau.\label{b7}
\end{eqnarray}
The existence and uniqueness of such a solution is guaranteed in \cite{zhao3}. We then define function $\mathcal{F}$ by
\begin{equation}
    \label{F}
    \mathcal{F}(\tau S,\mu) = -\frac{\p p}{\p {\bf n}}\Big|_{\Gamma_\tau},
\end{equation}
We know that $(L,H,F,p)$ is a symmetry-breaking stationary solution if and only if $\mathcal{F}(\tau S,\mu)=0$. Next we introduce the Banach spaces:
\begin{gather}
    X^{l+\alpha} = \{S\in C^{l+\alpha}(\Sigma), S \text{ is $2\pi$-periodic in $\theta$}\},\nonumber\\
    \label{Banach}
    X^{l+\alpha}_1 = \text{closure of the linear space spanned by $\{\cos(n\theta),n=0,1,2,\cdots\}$ in $X^{l+\alpha}$}.
\end{gather}
It has been shown in \cite{zhao3} that the mapping $\mathcal{F}(\cdot,\mu): X^{l+4+\alpha}_1 \rightarrow X^{l+1+\alpha}_1$ is bounded for any $l > 0 $.

In order to apply the Crandall-Rabinowitz theorem, we need to compute the Fr\'echet derivatives of $\mathcal{F}$. For a fixed small $\epsilon$, we write the expansion of $(L,H,F,p)$ of order $\tau$ as follows:
\begin{gather}
    L = L_*+\tau L_1 + O(\tau^2),\label{expand1}\\
    H = H_*+\tau H_1 + O(\tau^2), \label{expand2}\\
    F = F_*+\tau F_1 + O(\tau^2), \label{expand3}\\
    p = p_*+\tau p_1 + O(\tau^2). \label{expand4}
\end{gather}
The rigorous justification for \re{expand1} -- \re{expand4} can be found in \cite{zhao3}. Substituting \re{expand1} -- \re{expand4} into \re{b1} -- \re{b7}, and dropping all the higher order terms in $\tau$, we  obtain the system for $(L_1,H_1,F_1,p_1)$, which is also called the linearized system of \re{b1} -- \re{b7}.

Set the perturbation $S(\theta) = \cos(n\theta)$, we are seeking solutions of the form
\begin{eqnarray}
    &L_1=L_1^n \cos(n\theta), \hspace{2em} &H_1=H_1^n\cos(n\theta),\label{nt1}\\
    &F_1=F_1^n \cos(n\theta), \hspace{2em} &p_1=p_1^n\cos(n\theta).\label{nt2}
\end{eqnarray}
From \cite[(4.7)-(4.16)]{zhao3}, we know that the equations for $(L_1^n, H_1^n, F_1^n, p_1^n)$ are (Recall that $\Omega_*$ denotes the annulus $1-\epsilon\le r\le 1$):
\begin{eqnarray}
    &-\frac{\p^2 L_1^n}{\p r^2}-\frac{1}{r}\frac{\p L_1^n}{\p r} + \frac{n^2}{r^2}L_1^n= f_5(L_1^n,H_1^n,F_1^n) \hspace{2em} &\text{in }\Omega_*,\label{L1n1}\\
    &-\frac{\p^2 H_1^n}{\p r^2}-\frac{1}{r}\frac{\p H_1^n}{\p r} + \frac{n^2}{r^2}H_1^n= f_6(L_1^n,H_1^n,F_1^n) \hspace{2em} &\text{in }\Omega_*,\label{H1n1}\\
    &-\frac{\p^2 F_1^n}{\p r^2}-\frac{1}{r}\frac{\p F_1^n}{\p r} + \frac{ n^2}{r^2}F_1^n = \frac1D\Big(f_7(L_1^n,H_1^n, F_1^n)+\frac{\p F_*}{\p r}\frac{\p p_1^n}{\p r} + \frac{\p F_1^n}{\p r} \frac{\p p_*}{\p r}\Big)
   \hspace{2em} &\text{in }\Omega_*,\label{F1n1}\\
   &-\frac{\p^2 p_1^n}{\p r^2}-\frac{1}{r}\frac{\p p_1^n}{\p r} + \frac{n^2}{r^2}p_1^n =  f_8(L_1^n,H_1^n,F_1^n) \mm &\text{in }\Omega_* ,\label{p1n1}\\
    &\frac{\p L_1^n}{\p r}=\frac{\p H_1^n}{\p r}=\frac{\p F_1^n}{\p r}=\frac{\p p_1^n}{\p r} = 0 & r=1,\label{bdy1n1}\\
    &-\frac{\p L_1^n}{\p r}+\beta_1 L_1^n=\Big(\frac{\p^2 L_*}{\p r^2}-\beta_1\frac{\p L_*}{\p r}\Big)\Big|_{r=1-\epsilon} &r=1-\epsilon,\label{bdyL1n1}\\
    &-\frac{\p H_1^n}{\p r}+\beta_1 H_1^n=\Big(\frac{\p^2 H_*}{\p r^2}-\beta_1\frac{\p H_*}{\p r}\Big)\Big|_{r=1-\epsilon} &r=1-\epsilon,\label{bdyH1n1}\\
    &-\frac{\p F_1^n}{\p r}+\beta_2 F_1^n=\Big(\frac{\p^2 F_*}{\p r^2}-\beta_2\frac{\p F_*}{\p r}\Big)\Big|_{r=1-\epsilon} &r=1-\epsilon,\label{bdyF1n1}\\
    &p_1^n = \frac{1-n^2}{(1-\epsilon)^2} &r=1-\epsilon,\label{bdyp1n1}
\end{eqnarray}
where $f_5,f_6,f_7,$ and $f_8$ can all be bounded by linear functions of $|L_1^n|$, $|H_1^n|$, and $|F_1^n|$. In particular, $f_8$ is expressed as
\begin{equation}
    \label{f8}
    f_8(L_1^n,H_1^n,F_1^n) = \frac{1}{M_0}\Big[\lambda\frac{(M_0-F_*)L^n_1}{\gamma+H_*} -\lambda\frac{L_* F^n_1}{\gamma+H_*} -\lambda\frac{(M_0-F_*)L_*H^n_1}{(\gamma+H_*)^2} + (\rho_3-\rho_4)F_1^n\Big],
\end{equation}

Since $\mathcal{F}(0,\mu) = \frac{\p p_*}{\p r}\big|_{r=1-\epsilon} = 0$ by \re{r7a}, we can utilize the expansions \re{expand1} -- \re{expand4} to derive (more rigorous proof can be found in \cite{zhao3})
\begin{equation*}
    \begin{split}
        \mathcal{F}(\tau S,\mu)- \mathcal{F}(0,\mu)= -\frac{\p p}{\p {\bm n}}\Big|_{\Gamma_\tau}&=  \frac{\p (p_*+\tau p_1)}{\p r}\Big|_{r=1-\epsilon+\tau S} + O(|\tau|^2 \|S\|_{C^{4+\alpha}(\Sigma)})\\
        &=\tau\Big[\frac{\p^2 p_*}{\p r^2}\Big|_{r=1-\epsilon}S(\theta) + \frac{\p p_1}{\p r}\Big|_{r=1-\epsilon}\Big] + O(|\tau|^2 \|S\|_{C^{4+\alpha}(\Sigma)}),
    \end{split}
\end{equation*}
which leads to the Fr\'echet derivative of $\mathcal{F}$ at $(0,\mu)$ as below
\begin{equation*}
    \Big[\mathcal{F}_{\tilde{R}}(0,\mu)\Big]S(\theta) = \frac{\p^2 p_*}{\p r^2}\Big|_{r=1-\epsilon}S(\theta) + \frac{\p p_1}{\p r}\Big|_{r=1-\epsilon}.
\end{equation*}
Substituting in $S(\theta) = \cos(n\theta)$ and \re{nt2}, we further obtain
\begin{equation}\label{FreD}
      [\mathcal{F}_{\tilde{R}}(0,\mu)]\cos(n\theta) = \Big(\frac{\p^2 p_*(1-\epsilon)}{\p r^2} + \frac{\p p_1^n(1-\epsilon)}{\p r}\Big)\cos(n\theta).
\end{equation}
Therefore we denote   $\mu=\mu_n$ to be the solution to the equation
\begin{equation}
    \label{mun}
    \frac{\p^2 p_*(1-\epsilon)}{\p r^2} + \frac{\p p_1^n(1-\epsilon)}{\p r} = 0.
\end{equation}
Clearly, for each $n\ge 0$, $[\mathcal{F}_{\tilde{R}}(0,\mu)]\cos(n\theta) = 0$ if and only if $\mu = \mu_n$. It is shown that for $n\ge 2$,
$\mu=\mu_n$ is a bifurcation point.

\section{Useful Estimates and Lemmas}
A lot of estimates were derived in \cite{zhao3}. 
In order to derive a better estimates than \re{bifurpt} for $n=0$ and $1$, here we collect some of them (i.e., (2.11)--(2.13) and (4.47)--(4.52)) which will be useful in 
this paper. (Note that we only consider the cases when $n=0$ and 1, hence we don't need the order of $n$ in higher order terms of $\epsilon$.)

\begin{eqnarray}
 \label{Ls} L_*(r) & = & L_0 - \frac{\epsilon}{\beta_1}
 \Big( \frac{k_1 M_0 L_0}{K_1+L_0}+\rho_1 L_0\Big) + O(\epsilon^2) \\
 & = & \frac{\rho_3(\gamma+H_0)}\lambda + \epsilon \Big[ \frac\mu\lambda  \nonumber
  - \frac{\rho_3(\gamma+H_0)}{\beta_1}
 \Big( \frac{k_1 M_0  }{\lambda K_1+\rho_3(\gamma+H_0)}+\frac{\rho_1 }\lambda  \Big)\Big]+  O(\epsilon^2) \\
 & \triangleq & \frac{\rho_3(\gamma+H_0)}\lambda +\epsilon  L_*^1 + O(\epsilon^2),\nonumber  \\
 \label{Hs} H_*(r) & = & H_0  -   \epsilon  \frac{\rho_2 H_0 }{\beta_1} +  O(\epsilon^2)
\; \triangleq \; H_0 + \epsilon H_*^1 + O(\epsilon^2),\\
 \label{Fs} F_*(r) & = &  \frac\epsilon{\beta_2}  \frac{k_1 M_0 L_0}{D(K_1+L_0)}   +  O(\epsilon^2) \\ & = & \epsilon \; \frac{\rho_3(\gamma+H_0)}{\beta_2 D} \;
 \frac{k_1 M_0  }{\lambda K_1+\rho_3(\gamma+H_0)}+ O(\epsilon^2) 
\; \triangleq \; \epsilon F_*^1 + O(\epsilon^2).\nonumber\\
\label{L1n} L_1^n(r) &=& \frac{1}{\beta_1}\Big(\frac{\p^2 L_*}{\p r^2} - \beta_1 \frac{\p L_*}{\p r}\Big)\Big|_{r=1-\epsilon} + O(\epsilon) \; =\; \frac{\mu}{\lambda} - L_*^1 + O(\epsilon),\\
\label{H1n} H_1^n(r) &=& \frac{1}{\beta_1}\Big(\frac{\p^2 H_*}{\p r^2} - \beta_1 \frac{\p H_*}{\p r}\Big)\Big|_{r=1-\epsilon} + O(\epsilon) \; =\; - H_*^1 + O(\epsilon),\\
\label{F1n} F_1^n(r) &=& \frac{1}{\beta_2}\Big(\frac{\p^2 F_*}{\p r^2} - \beta_2 \frac{\p F_*}{\p r}\Big)\Big|_{r=1-\epsilon} + O(\epsilon) \; =\;  - F_*^1 + O(\epsilon).
\end{eqnarray}

The following lemma is from \cite{zhao3} (\textit{Lemma 2.2}). It includes a relationship among the parameters, which will be used later.
\begin{lem}
\label{rho-relation}
For the radially symmetric stationary solution $(L_*,H_*,F_*,p_*)$, the following estimates holds
\begin{equation}
    \label{rho41}
    \frac{M_0(\lambda L_*^1 - \rho_3 H_*^1) }{\gamma+ H_0} - \rho_4 F_*^1 = O(\epsilon),
\end{equation}
in other words, $\rho_4$ can be expressed as
\begin{equation}
    \label{rho42}
    \rho_4 = \frac{1}{F_*^1}\,\frac{M_0}{\gamma+H_0}(\lambda L_*^1 - \rho_3 H_*^1) + O(\epsilon).
\end{equation}
\end{lem}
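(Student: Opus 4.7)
The plan is to extract the relation from a solvability condition for the $p_*$ equation. The PDE \re{r4} together with the Neumann conditions \re{r5} at $r=1$ and \re{r7a} at $r=1-\epsilon$ makes $p_*$ the solution of a \emph{pure Neumann} problem on $\Omega_*$. Integrating \re{r4} over $\Omega_*$ and applying the divergence theorem, the left-hand side vanishes, so the right-hand side must integrate to zero:
\[
\int_{1-\epsilon}^{1} \Big[\lambda\frac{(M_0-F_*)L_*}{\gamma+H_*}-\rho_3(M_0-F_*) - \rho_4 F_*\Big] \, r\,dr \; = \; 0.
\]
This compatibility condition is the only input we need; the rest is Taylor expansion in $\epsilon$.

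Next, I would substitute the expansions \re{Ls}, \re{Hs}, \re{Fs} into the integrand. The $O(1)$ contribution is
\[
\lambda\cdot M_0\cdot\frac{\rho_3(\gamma+H_0)/\lambda}{\gamma+H_0}\;-\;\rho_3 M_0 \;=\;M_0\rho_3-M_0\rho_3\;=\;0,
\]
so the leading surviving term is $O(\epsilon)$. A careful product-rule/quotient-rule expansion (using $(\gamma+H_*)^{-1}=(\gamma+H_0)^{-1}(1-\epsilon H_*^1/(\gamma+H_0))+O(\epsilon^2)$) shows that the integrand equals
\[
\epsilon\,\Big[\frac{M_0(\lambda L_*^1-\rho_3 H_*^1)}{\gamma+H_0}-\rho_4 F_*^1\Big]+O(\epsilon^2),
\]
where, crucially, the $\rho_4$ term already enters at first order because $F_*=\epsilon F_*^1+O(\epsilon^2)$.

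Finally I would plug this into the integral. Since $\int_{1-\epsilon}^{1} r\,dr=\epsilon-\tfrac12\epsilon^2$, the leading-order integrand contributes $\epsilon^2\bigl[\tfrac{M_0(\lambda L_*^1-\rho_3 H_*^1)}{\gamma+H_0}-\rho_4 F_*^1\bigr]$, while the $O(\epsilon^2)$ remainder in the integrand, once multiplied by $r\,dr$ and integrated over a strip of width $\epsilon$, produces only an $O(\epsilon^3)$ correction. Dividing the whole solvability equation by $\epsilon^2$ yields exactly \re{rho41}, and then \re{rho42} is obtained by solving for $\rho_4$, which is legitimate because $F_*^1=\frac{\rho_3(\gamma+H_0)}{\beta_2 D}\cdot\frac{k_1 M_0}{\lambda K_1+\rho_3(\gamma+H_0)}\neq 0$.

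The only real obstacle is the bookkeeping in the Taylor expansion: one must confirm that the $O(\epsilon^2)$ remainder in the integrand is \emph{uniform} in $r\in[1-\epsilon,1]$, so that it integrates to $O(\epsilon^3)$ rather than something worse. This uniformity is guaranteed by the $C^\infty$ smoothness of $(L_*,H_*,F_*)$ on the extended annulus $\Omega_{2\epsilon}$ noted in the remark following Theorem 2.1, so no new estimate is required. In short, the lemma is essentially the first-order Fredholm solvability condition for $p_*$.
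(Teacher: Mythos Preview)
Your argument is correct. The paper does not supply its own proof of this lemma; it merely cites it as Lemma~2.2 of the companion paper \cite{zhao3}. Your approach---reading \re{rho41} as the Fredholm/compatibility condition for the pure Neumann problem \re{r4}, \re{r5}, \re{r7a} satisfied by $p_*$, and then Taylor-expanding the integrand using \re{Ls}--\re{Fs}---is exactly the natural one and is almost certainly what is done in \cite{zhao3}. One small cosmetic point: in the radially symmetric setting the divergence theorem reduces to the one-line computation $\int_{1-\epsilon}^1(-\Delta p_*)\,r\,dr=-[r\,p_*']_{1-\epsilon}^{1}=0$, so you need not appeal to anything beyond integrating the ODE $(r p_*')'=-r\cdot(\text{RHS})$ once; this also makes the uniformity-of-remainder issue you flagged entirely transparent.
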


Notice that equations \re{L1n1} -- \re{p1n1} are of similar structure, hence we denote the operator $\mathscript L_n \triangleq \frac{\p^2}{\p r^2} + \frac{1}{r}\frac{\p }{\p r} + \frac{n^2}{r^2}$. For this special operator, one can easily verify the following lemmas (special case of $n=0$ and $1$ in \cite[Lemma 4.2]{zhao3}, with the case $n=1$ modified to satisfy $\psi_1(1)=\psi_1'(1)=0$)
\begin{lem}\label{eqn-n}
  The general solution of ($\eta$ is a constant)
\begin{equation}
\label{psieqn}
\begin{split}
 & {\mathscript L}_n [\psi]\triangleq- \psi''- \frac1r \psi' + \frac{n^2}{r^2} \psi  = \eta + f(r) \mm 1-\epsilon < r < 1 , \\
 & \psi'(1) = 0, 
 \end{split}
\end{equation}
is given by 
\bea\label{sol}
 && \psi - \psi_1  =  \left\{ \begin{array}{ll} \dis
 A r + \frac1r \Big(A+K[f]'(1)\Big) + K[f](r)  \mm & n=1, \\
  A + K[f](r) & n= 0,
\end{array}\right. 
\eea
where
\bea \label{Kf} 
&&  K[f](r)  =  
 \left\{ \begin{array}{ll}
 \dis \frac{r}{2} \int_r^1  f(s) \, \dif s+
  \frac{r^{-1}}{2} \int_{1-\epsilon}^r  s^2 f(s) \, \dif s \hspace{1em}& n=1,\\
 \dis\rule{0pt}{18pt}- \int_{r}^1 \Big(\log\frac{s}r\Big)\; s f(s) \, \dif s & n= 0;
\end{array}\right.
 \eea
in addition, $\psi_1(r)$ satisfies
\begin{equation}\label{psi1eqn}
\begin{split}
 & {\mathscript L}_n[\psi_1] = - \psi_1''- \frac1r \psi_1' + \frac{n^2}{r^2} \psi_1  = \eta  \mm 1-\epsilon < r < 1 ,\\
 & \psi_1'(1) = 0, \mm \psi_1(1) = 0,
 \end{split}
\end{equation}
and is given by 
\bea \label{psi1} 
\psi_1 =  \left\{ \begin{array}{ll}
 \dis \eta\Big(-\frac1{6r}+\frac12 r - \frac13 r^2\Big) \mm& n=1, \\
 \vspace{-8pt}\\
 \dis\eta\Big( \frac{1-r^2}4 + \frac12\log r\Big) \hspace{1em}& n = 0.
 \end{array}\right.  
\eea 
The special solution $K[f]$ satisfies
\be 
 \label{estK} | K[f](r) | \le  \frac{\epsilon}{2} \|f\|_{L^\infty}, \mm
  | K[f]'(r) | \le  \frac{\epsilon}{2} \|f\|_{L^\infty} , \mm n=1,
\ee
and
\be 
 | K[f](r) | \le  \epsilon \|f\|_{L^\infty}, \mm
  | K[f]'(r) | \le  \epsilon \|f\|_{L^\infty} , \mm n = 0.
\ee
\end{lem}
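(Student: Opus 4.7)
The plan is to exploit the linearity of $\mathscript L_n$ to split the solution into three pieces: the particular solution $\psi_1$ that absorbs the constant source $\eta$, a particular solution $K[f]$ that absorbs the variable source $f(r)$, and the general homogeneous solution, with the Neumann condition $\psi'(1)=0$ fixing one constant at the end. Since the ambient equation on $(1-\epsilon,1)$ is a second-order linear ODE with the Bessel-type operator $\mathscript L_n$, the fundamental pair of $\mathscript L_n[\phi]=0$ is $\{r,r^{-1}\}$ for $n=1$ and $\{1,\log r\}$ for $n=0$, which is all that is needed.

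For $\psi_1$, since the forcing $\eta$ is constant, a polynomial or polynomial-plus-logarithm ansatz solves the problem by inspection: for $n=1$ the ansatz $\psi_1=ar+b/r+cr^2$ combined with $\mathscript L_1[r^2]=-3$ gives a particular solution proportional to $r^2$, and then the two conditions $\psi_1(1)=\psi_1'(1)=0$ pin down $a$ and $b$ to yield the displayed formula \re{psi1}; the $n=0$ case is analogous, using $\psi_1=a+b\log r+cr^2$. For $K[f]$, I would apply variation of parameters against the fundamental pair, working from the self-adjoint form $-(r\psi')'+n^2\psi/r=rf$ in which the Wronskian is transparent; this produces the Green's function and hence precisely the integral expressions \re{Kf}. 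The choice of endpoints $1-\epsilon$ and $1$ in the two integrals is dictated by the requirement that $K[f]$ be $C^1$ and that $K[f](1)=K[f]'(1)=0$ in the $n=0$ case, while for $n=1$ one finds $K[f](1)=0$ but a generally nonzero value of $K[f]'(1)$ that will have to be absorbed into the homogeneous correction.

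The remaining step is to add the homogeneous piece $A\phi_1+B\phi_2$ to $\psi_1+K[f]$ and impose $\psi'(1)=0$. For $n=0$, since $K[f]'(1)=0$ and $\psi_1'(1)=0$ by construction, the condition forces the coefficient of $\log r$ to vanish, leaving $\psi-\psi_1=A+K[f](r)$ as asserted. For $n=1$, a direct computation of $(Ar+B/r)'$ at $r=1$ combined with the contribution $K[f]'(1)$ gives $B=A+K[f]'(1)$, producing exactly the formula in \re{sol}. The $L^\infty$ bounds \re{estK} then follow by bounding each integrand in \re{Kf} by $\|f\|_{L^\infty}$ and integrating over an interval of length at most $\epsilon$, the explicit factor $\tfrac12$ in the $n=1$ case coming from the symmetric weights $r/2$ and $r^{-1}/2$. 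The main obstacle is not the construction itself but the careful bookkeeping of the $K[f]'(1)$ term and the sharp numerical constants in the $n=1$ estimate, since these refined constants must feed through Section~4 to distinguish $\mu_0$ from $\mu_1$ at order $O(\epsilon)$, which was the very obstruction that prevented the authors of \cite{zhao3} from treating the $n=1$ case by the earlier coarser bounds.
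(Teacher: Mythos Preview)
Your approach is correct and is exactly the standard variation-of-parameters argument the paper has in mind (it simply cites \cite{zhao3} for the proof, with the added normalization $\psi_1(1)=0$). One small slip: for $n=1$ you do \emph{not} have $K[f](1)=0$; in fact $K[f](1)=\tfrac12\int_{1-\epsilon}^1 s^2 f(s)\,\dif s=-K[f]'(1)$, but this is harmless since only the derivative condition $\psi'(1)=0$ is used to fix $B=A+K[f]'(1)$.
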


The proof is the same as in \cite{zhao3}. We add one more restriction on $\psi_1$, i.e., $\psi_1(1) = 0$, but it doesn't affect the proof. Since $\psi_1(1) = \psi_1'(1)= 0$, and by \re{psi1eqn},
\[
 \psi_1''(1) = -\eta + \Big(-\frac1r \psi_1' + \frac{n^2}{r^2}\psi_1 \Big)\Big|_{r=1} = -\eta.
\]
In addition, differentiating the equation \re{psi1eqn} and evaluating at $r=1$, we further obtain
\[ 
 \psi_1'''(1) = \Big( -\frac{r \psi_1'' -\psi_1'}{r^2} + n^2 \frac{r^2 \psi_1' - 2r\psi_1}{r^4}\Big)\Big|_{r=1} = - \psi_1''(1)  =\eta.
\]
\void{
\begin{equation*}
    \psi_1''(1) = \left\{
    \begin{split}
        &\eta\Big(-\frac13 \frac1{r^3} - \frac23\Big)\Big|_{r=1} = -\eta\hspace{2em} &n=1,\\
        &\eta\Big(-\frac12 - \frac1{2r^2}\Big)\Big|_{r=1} = -\eta \hspace{2em}&n=0,
    \end{split}
    \right.
\end{equation*}
\begin{equation*}
    \psi_1'''(1) = \left\{
    \begin{split}
        &\frac\eta{r^4}\Big|_{r=1} = \eta\hspace{2em} &n=1,\\
        &\frac\eta{r^3}\Big|_{r=1} = \eta \hspace{2em}&n=0,
    \end{split}
    \right.
\end{equation*}
}
Based on the above two equations, it then follows from the Taylor series that, for $1-\epsilon\le r\le1$,
\begin{gather}
    \label{1} \psi_1(r) = \psi_1(1) + \psi_1'(1)(r-1) + \frac{\psi_1''(1)}{2}(r-1)^2 + \cdots = -\frac\eta2 (r-1)^2 + O(\epsilon^3),\\
    \label{2} \psi_1'(r) = \psi_1'(1) + \psi_1''(1)(r-1) + \frac{\psi_1'''(1)}{2}(r-1)^2 + \cdots = -\eta(r-1) + \frac\eta2(r-1)^2 +  O(\epsilon^3).
\end{gather}
In particular, we have
\begin{gather}
    \label{3}\psi_1(1-\epsilon) = -\frac{\eta}{2}\epsilon^2 + O(\epsilon^3),\\
    \label{4}\psi_1'(1-\epsilon) = \epsilon \eta + \frac\eta2 \epsilon^2 + O(\epsilon^3).
\end{gather}

\begin{lem}\label{eqn-n1} 
If in addition to \re{psieqn}, we further assume the boundary condition 
\begin{equation}
    \label{bdypsi1}
    -\psi'(1-\epsilon) + \beta \psi(1-\epsilon) = G,
\end{equation}
then the coefficient $A$ in \re{sol} can be explicitly computed as: for $n=1$
\be \label{A1}
A = \frac{G  + \psi_1'(1-\epsilon)  - \beta \psi_1(1-\epsilon)- \beta K[f](1-\epsilon) + K[f]'(1-\epsilon) - \frac{1}{(1-\epsilon)^2}K[f]'(1)-\frac{\beta}{1-\epsilon}K[f]'(1)}{-1+\frac{1}{(1-\epsilon)^2}+\beta(1-\epsilon) + \frac{\beta}{1-\epsilon}},
\ee
and for $n=0$,
\be\label{A0}
A = \frac{1}{\beta}\Big[G  + \psi_1'(1-\epsilon)  - \beta \psi_1(1-\epsilon)- \beta K[f](1-\epsilon) + K[f]'(1-\epsilon)\Big].
\ee
\end{lem}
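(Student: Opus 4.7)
The plan is a direct computation. Since Lemma \ref{eqn-n} already expresses the general solution of \re{psieqn} explicitly up to a single free constant $A$, the additional Robin-type boundary condition \re{bdypsi1} becomes one linear equation in $A$, which I can solve in closed form. I would treat the two cases separately.

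For the case $n=1$, I start from
\[
 \psi(r)= \psi_1(r) + A r + \frac{1}{r}\bigl(A+K[f]'(1)\bigr) + K[f](r),\qquad
 \psi'(r)= \psi_1'(r) + A - \frac{1}{r^2}\bigl(A+K[f]'(1)\bigr) + K[f]'(r),
\]
evaluate both expressions at $r=1-\epsilon$, substitute into $-\psi'(1-\epsilon)+\beta\psi(1-\epsilon)=G$, and split the result into its $A$-dependent and $A$-independent pieces. The coefficient of $A$ collapses to exactly $-1+(1-\epsilon)^{-2}+\beta(1-\epsilon)+\beta/(1-\epsilon)$, while the remaining terms, after moving $G$ to the other side, are precisely the six summands appearing in the numerator of \re{A1}. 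A single division then produces the formula.

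For the case $n=0$ the computation is simpler because $\psi'(r)=\psi_1'(r)+K[f]'(r)$ carries no $A$-dependence; substituting $\psi = \psi_1+A+K[f]$ and this $\psi'$ into \re{bdypsi1} isolates a single $\beta A$ contribution coming from $\beta\psi(1-\epsilon)$, and dividing by $\beta$ yields \re{A0}.

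I do not anticipate any real conceptual obstacle; the argument is bookkeeping. The two points that require care are, first, that the extra normalization $\psi_1(1)=0$ introduced in Lemma \ref{eqn-n} (not present in the analogous lemma of \cite{zhao3}) must be carried through so that $\psi_1(1-\epsilon)$ and $\psi_1'(1-\epsilon)$ appear explicitly in the numerator --- this is precisely the quantity controlled by \re{3}--\re{4} and used later in the bifurcation analysis; and, second, that the constant $K[f]'(1)$ sitting inside $(A+K[f]'(1))/r$ in the $n=1$ case must be cleanly separated from the $A$-dependent part before solving the linear equation. Once this separation is done, the identities \re{A1} and \re{A0} are exact --- no estimate or approximation is needed.
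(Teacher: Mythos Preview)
Your proposal is correct and is exactly the intended approach: the paper itself states this lemma without proof, remarking only that one can easily verify it, so the direct substitution of \re{sol} into the Robin condition \re{bdypsi1} and solving the resulting linear equation in $A$ is precisely what is expected. Your separation of the $A$-dependent and $A$-independent terms in the $n=1$ case and the observation that $\psi'$ is $A$-free in the $n=0$ case are the right bookkeeping points, and the computation yields \re{A1} and \re{A0} exactly as claimed.
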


\begin{lem}\label{eqn-n2}
If $f(r)=O(\epsilon)$ in \re{psieqn}, and the assumptions in Lemma \ref{eqn-n1} hold, then for $1-\epsilon \le r \le 1$,
\begin{eqnarray}
    \label{psisol1}  &&\psi(r) = \frac{G}{\beta} + \epsilon\Big(\frac{\eta}{\beta}-\frac{G}{\beta^2}\Big) + O(\epsilon^2) \mm n=1,\\
    \label{psisol2}  &&\psi(r) = \frac{G}{\beta} + \epsilon\frac{\eta}{\beta} + O(\epsilon^2) \hspace{5.6em} n=0.
\end{eqnarray}

\end{lem}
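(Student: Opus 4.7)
The proof is essentially a direct substitution of the estimates already developed in this section into the explicit formulas of Lemmas \ref{eqn-n} and \ref{eqn-n1}. The plan is as follows. First, I would record the preliminary estimates. From the assumption $f(r) = O(\epsilon)$ together with \re{estK}, both $K[f](r)$ and $K[f]'(r)$ are $O(\epsilon^2)$ on $[1-\epsilon,1]$. From the Taylor expansions \re{1}--\re{4}, $\psi_1(r) = O(\epsilon^2)$ uniformly on $[1-\epsilon,1]$, $\psi_1(1-\epsilon) = O(\epsilon^2)$, and $\psi_1'(1-\epsilon) = \epsilon\eta + O(\epsilon^2)$. All of these are $O(1)$--free perturbations, which is what allows the leading constant in $\psi$ to come entirely from $A$.

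For the case $n = 0$, I would plug these estimates into \re{A0} to read off $A = G/\beta + \epsilon\eta/\beta + O(\epsilon^2)$. Since the representation in \re{sol} is $\psi(r) = \psi_1(r) + A + K[f](r)$, and both $\psi_1(r)$ and $K[f](r)$ are $O(\epsilon^2)$, the desired formula \re{psisol2} follows immediately.

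For the case $n = 1$, the only nontrivial step is expanding the denominator of \re{A1}. Using $1/(1-\epsilon)^2 = 1 + 2\epsilon + O(\epsilon^2)$ and $1/(1-\epsilon) = 1 + \epsilon + O(\epsilon^2)$, the denominator simplifies to $2\beta + 2\epsilon + O(\epsilon^2)$ after the $-1$ cancels the leading $1$ from $1/(1-\epsilon)^2$ and the $\pm \beta\epsilon$ contributions from $\beta(1-\epsilon)$ and $\beta/(1-\epsilon)$ cancel; the numerator is $G + \epsilon\eta + O(\epsilon^2)$. A first-order expansion of the reciprocal yields $A = G/(2\beta) + (\epsilon/2)(\eta/\beta - G/\beta^2) + O(\epsilon^2)$. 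Substituting back into $\psi(r) = \psi_1(r) + A(r + 1/r) + K[f]'(1)/r + K[f](r)$, and using the elementary observation that $r + 1/r = 2 + O(\epsilon^2)$ on $[1-\epsilon,1]$, I conclude $\psi(r) = 2A + O(\epsilon^2)$, which is precisely \re{psisol1}.

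I do not expect any serious obstacle; the argument is pure bookkeeping. The one point deserving comment is the cancellation in the $n=1$ denominator that leaves $2\beta$ (rather than something of order $\epsilon$) as the leading constant; this is exactly what produces the additional $-G/\beta^2$ correction distinguishing the $n=1$ expansion from the $n=0$ one, and it is the reason the two formulas \re{psisol1} and \re{psisol2} agree at leading order but differ at order $\epsilon$.
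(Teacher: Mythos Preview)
Your proposal is correct and follows essentially the same route as the paper: both arguments use $f=O(\epsilon)$ with \re{estK} to get $K[f],K[f]'=O(\epsilon^2)$, insert these together with \re{3}--\re{4} into \re{A1} and \re{A0} to expand $A$, and then read off $\psi(r)$ from \re{sol} using $r+1/r=2+O(\epsilon^2)$ for $n=1$. Your explicit check that the $n=1$ denominator equals $2\beta+2\epsilon+O(\epsilon^2)$ matches the paper's $2(\beta+\epsilon)+O(\epsilon^2)$.
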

\begin{proof}
Based on Lemma \ref{eqn-n}, if $f(r)=O(\epsilon)$ in \re{psieqn}, we have in either $n=0$ or $n=1$ case,
\begin{equation*}
    K[f](r) = O(\epsilon^2),\mm K[f]'(r) = O(\epsilon^2).
\end{equation*}
Substituting into \re{A1} and \re{A0}, recalling also \re{3} and \re{4}, we obtain
\begin{equation*}
\begin{split}
    &A = \frac{G + \epsilon\eta + O(\epsilon^2) }{2(\beta + \epsilon) + O(\epsilon^2)} = \frac{G}{2\beta} + \epsilon\Big(\frac{\eta}{2\beta}-\frac{G}{2\beta^2}\Big) + O(\epsilon^2) \mm &n = 1,\\
    &A = \frac{G+\epsilon \eta + O(\epsilon^2)}{\beta} =\frac{G}{\beta} + \epsilon \frac{\eta}{\beta} + O(\epsilon^2)\mm & n=0.
    \end{split}
\end{equation*}
Since $A$ is the only coefficient in \re{sol}, we can now substitute the above expressions for $A$ into \re{sol} to derive, for $1-\epsilon \le r \le 1$,
\begin{equation*}
\begin{split}
    &\psi(r) = A\Big(r+\frac1r\Big) + O(\epsilon^2) = 2A + O(\epsilon^2)  = \frac{G}{\beta} + \epsilon\Big(\frac{\eta}{\beta}-\frac{G}{\beta^2}\Big) + O(\epsilon^2)\mm &n=1,\\
    &\psi(r) = A + O(\epsilon^2) = \frac{G}{\beta} + \epsilon \frac{\eta}{\beta} + O(\epsilon^2) \mm &n=0.
    \end{split}
\end{equation*}
This completes the proof.
\end{proof}
Notice that in this lemma the difference between $n=1$ and $n=0$ cases starts from $O(\epsilon)$ terms; furthermore, the difference in $O(\epsilon)$ terms is $\epsilon \frac{G}{\beta^2}$, which is determined only by $G$ and $\beta$.

Lemma \ref{eqn-n}, together with Lemmas \ref{eqn-n1} and \ref{eqn-n2}, are applied to equations for $L_1^n$, $H_1^n$ and $F_1^n$. Notice that the boundary condition for $p_1^n$ is 
$$p_1^n(1-\epsilon) = \frac{1-n^2}{(1-\epsilon)^2},$$
which is of a different form from \re{bdypsi1}, we hence need the following lemma that can be easily verified: 

\begin{lem}\label{eqn-n3}
If in addition to \re{psieqn}, we further assume the boundary condition for $\psi$
\begin{equation}
    \label{bdypsi2}
    \psi(1-\epsilon) = \frac{1-n^2}{(1-\epsilon)^2},
\end{equation}
then the coefficient $A$ in \re{sol} is solved as
\begin{equation*}
    \begin{split}
        &A = \frac{-\psi_1(1-\epsilon) - K[f](1-\epsilon) -\frac{1}{1-\epsilon}K[f]'(1)}{1-\epsilon + \frac{1}{1-\epsilon}}  \mm &n=1,\\
        &A = 
        - \psi_1(1-\epsilon) - K[f](1-\epsilon) \mm &n=0.
    \end{split}
\end{equation*}
\end{lem}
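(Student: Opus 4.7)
The argument is a single-step substitution that parallels the proof of Lemma \ref{eqn-n1}. By Lemma \ref{eqn-n}, every solution of \re{psieqn} (which already encodes $\psi'(1)=0$) has the form
\[
\psi(r) = \psi_1(r) + \begin{cases} Ar + r^{-1}\bigl(A + K[f]'(1)\bigr) + K[f](r), & n=1, \\ A + K[f](r), & n=0, \end{cases}
\]
depending on a single undetermined constant $A$; the role of the added Dirichlet condition \re{bdypsi2} is precisely to fix that constant.

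The plan is therefore to evaluate the expression above at $r=1-\epsilon$ and equate it to the prescribed value $(1-n^2)/(1-\epsilon)^2$. For $n=1$ the right-hand side vanishes, and the matching condition reads
\[
A(1-\epsilon) + \frac{A+K[f]'(1)}{1-\epsilon} + \psi_1(1-\epsilon) + K[f](1-\epsilon) = 0.
\]
Collecting the coefficient of $A$ produces the denominator $1-\epsilon+(1-\epsilon)^{-1}$, and isolating $A$ delivers exactly the formula asserted in the lemma. For $n=0$ there is no $A/r$ contribution, so the matching condition reduces to the single scalar equation $\psi_1(1-\epsilon)+A+K[f](1-\epsilon) = (1-\epsilon)^{-2}$, from which $A$ is read off directly.

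The only consistency check requiring any thought is verifying that the form of the general solution given in Lemma \ref{eqn-n} indeed satisfies $\psi'(1)=0$ automatically, so that \re{bdypsi2} is the sole remaining constraint on $A$. For $n=1$ this rests on the cancellation $A - (A+K[f]'(1)) + K[f]'(1) = 0$ at $r=1$, built into the coefficient of $1/r$; for $n=0$ it uses the identity $K[f]'(1)=0$ that follows directly from the definition of $K[f]$ in \re{Kf}. Beyond these routine checks, there is no analytic obstacle: the lemma is a pure book-keeping identity mirroring Lemma \ref{eqn-n1}, but with the Robin condition replaced by a Dirichlet one.
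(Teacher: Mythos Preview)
Your approach is exactly the one the paper has in mind: the lemma is stated as something that ``can be easily verified,'' and the verification is nothing more than evaluating the general solution \re{sol} at $r=1-\epsilon$ and solving the resulting linear equation for $A$. For $n=1$ your computation is complete and correct.

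There is, however, a discrepancy in the $n=0$ case that you glossed over. You correctly wrote the matching equation
\[
\psi_1(1-\epsilon) + A + K[f](1-\epsilon) = \frac{1}{(1-\epsilon)^2},
\]
but then said ``$A$ is read off directly'' as though this reproduces the formula in the lemma. It does not: solving gives
\[
A = \frac{1}{(1-\epsilon)^2} - \psi_1(1-\epsilon) - K[f](1-\epsilon),
\]
which carries an extra $(1-\epsilon)^{-2}$ term not present in the lemma's stated expression. This appears to be a slip in the paper's statement for $n=0$ rather than an error in your derivation. It is harmless downstream: in the proof of Lemma~\ref{eqn-n4} the only quantity needed for $n=0$ is $\psi'(1-\epsilon) = \psi_1'(1-\epsilon) + K[f]'(1-\epsilon)$, which is independent of the constant $A$, so the missing term never enters the argument. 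Still, you should flag the discrepancy explicitly rather than claim the substitution matches.
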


\begin{lem}
\label{eqn-n4}
If $f(r) = O(\epsilon^2)$ in \re{psieqn}, and the assumptions in Lemma \ref{eqn-n3} hold, then for $n=0,1$ and $1-\epsilon \le r \le 1$,
\begin{eqnarray}
    \psi'(1-\epsilon) = \epsilon \eta + \epsilon^2 \frac{\eta}{2} + O(\epsilon^3).
\end{eqnarray}
\end{lem}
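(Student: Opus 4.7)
The plan is to read off $\psi'(1-\epsilon)$ from the explicit representation in Lemma \ref{eqn-n}, plug in the coefficient $A$ determined by the boundary condition $\psi(1-\epsilon)=(1-n^2)/(1-\epsilon)^2$ given in Lemma \ref{eqn-n3}, and then show that every contribution other than $\psi_1'(1-\epsilon)$ is absorbed into $O(\epsilon^3)$. The key observation is that once $f=O(\epsilon^2)$, the estimates \re{estK} yield $K[f](r)=O(\epsilon^3)$ and $K[f]'(r)=O(\epsilon^3)$ uniformly for $1-\epsilon\le r\le 1$, so the only potentially large piece comes from $\psi_1$ itself.

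First I would treat the $n=0$ case, which is immediate: from \re{sol} we have $\psi'(r)=\psi_1'(r)+K[f]'(r)$, so combining \re{4} with $K[f]'(1-\epsilon)=O(\epsilon^3)$ gives the asserted expansion. Next, for $n=1$, differentiating \re{sol} yields
\begin{equation*}
 \psi'(r) \;=\; \psi_1'(r) + A\Bigl(1-\tfrac{1}{r^{2}}\Bigr) - \tfrac{1}{r^{2}}K[f]'(1) + K[f]'(r),
\end{equation*}
so at $r=1-\epsilon$,
\begin{equation*}
 \psi'(1-\epsilon) = \psi_1'(1-\epsilon) + A\Bigl(1-\tfrac{1}{(1-\epsilon)^{2}}\Bigr) - \tfrac{1}{(1-\epsilon)^{2}}K[f]'(1) + K[f]'(1-\epsilon).
\end{equation*}

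Second, I would estimate $A$ from Lemma \ref{eqn-n3}. Using \re{3} we get $\psi_1(1-\epsilon)=-\tfrac{\eta}{2}\epsilon^2+O(\epsilon^3)$, and the $K[f]$ terms in the numerator are $O(\epsilon^3)$, while the denominator $(1-\epsilon)+1/(1-\epsilon)=2+O(\epsilon)$. Hence $A=\tfrac{\eta}{4}\epsilon^2+O(\epsilon^3)$. Since $1-1/(1-\epsilon)^2=-2\epsilon+O(\epsilon^2)$, the product $A\bigl(1-1/(1-\epsilon)^2\bigr)=O(\epsilon^3)$, and the remaining $K[f]'$ terms are already $O(\epsilon^3)$. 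Combining with \re{4}, I obtain $\psi'(1-\epsilon)=\epsilon\eta+\tfrac{\eta}{2}\epsilon^2+O(\epsilon^3)$, matching the $n=0$ formula.

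There is no substantive obstacle here; the lemma is essentially an order-of-magnitude bookkeeping statement. The only point that needs a moment of care is verifying that $A$ itself is only $O(\epsilon^2)$ (not $O(\epsilon)$) when the inhomogeneous source $f$ has size $O(\epsilon^2)$, so that the $A(1-1/(1-\epsilon)^2)$ term in the $n=1$ case does not contaminate the $O(\epsilon^2)$ coefficient of the expansion. This is exactly the mechanism that makes the $n=0$ and $n=1$ expansions coincide to order $O(\epsilon^3)$, in contrast with Lemma \ref{eqn-n2} where they first differ at order $\epsilon$.
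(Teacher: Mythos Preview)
Your proposal is correct and follows essentially the same route as the paper's proof: both differentiate the explicit representation \re{sol}, use $K[f],K[f]'=O(\epsilon^3)$ when $f=O(\epsilon^2)$, observe from Lemma~\ref{eqn-n3} that $A=O(\epsilon^2)$ so the $A\bigl(1-1/(1-\epsilon)^2\bigr)$ term in the $n=1$ case is $O(\epsilon^3)$, and then read off the expansion from \re{4}. Your write-up is slightly more explicit in computing $A=\tfrac{\eta}{4}\epsilon^2+O(\epsilon^3)$, whereas the paper simply records $A=O(\epsilon^2)$, but the argument is the same.
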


\begin{proof}
If $f = O(\epsilon^2)$ in \re{psieqn}, by Lemma \ref{eqn-n} we have
\begin{equation*}
    K[f](r) = O(\epsilon^3),\mm K[f]'(r) = O(\epsilon^3).
\end{equation*}
In order to estimate $\psi'(1-\epsilon)$, we differentiate \re{sol} and evaluate the derivative at $r=1-\epsilon$,
\begin{equation*}
    \begin{split}
        &\psi'(1-\epsilon) = \psi_1'(1-\epsilon) + A - \frac{1}{(1-\epsilon)^2}\Big(A + K[f]'(1)\Big) + K[f]'(1-\epsilon) \mm &n=1,\\
        &\psi'(1-\epsilon) = \psi_1'(1-\epsilon) + K[f]'(1-\epsilon) \mm & n=0.
    \end{split}
\end{equation*}
Combining with the expression of $A$ in Lemma \ref{eqn-n3}, recalling also \re{1} -- \re{4}, we further derive
$A=O(\epsilon^2)$, and in both cases,
\begin{equation*}
    \begin{split}
        &\psi'(1-\epsilon) = \epsilon \eta + \epsilon^2\frac{\eta}{2} + O(\epsilon^3) \mm & n=0,1,
    \end{split}
\end{equation*}
which completes the proof.
\end{proof}

\section{Proof of Theorem 1.1}
As mentioned before, the key is to show $\mu_0\neq \mu_1$.
Since they are the same on the order of $O(1)$, we shall derive higher order approximations for $\mu_0$ and $\mu_1$. Hence the estimates \re{L1n} -- \re{F1n} for $L_1^n$, $H_1^n$ and $F_1^n$ are not enough for the purpose of this paper -- we need more information about the higher order terms in $\epsilon$. To do that, we denote
\begin{eqnarray}
    \label{L11n} L_1^n &=& \frac{\mu}{\lambda} - L_*^1 + \epsilon L_{11}^n + O(\epsilon^2),\\
    \label{H11n} H_1^n &=& -H_*^1 + \epsilon H_{11}^n + O(\epsilon^2),\\
    \label{F11n} F_1^n &=& -F_*^1 + \epsilon F_{11}^n + O(\epsilon^2),
\end{eqnarray}
and we proceed to estimate $L_{11}^n$, $H_{11}^n$ and $F_{11}^n$ based on Lemmas \ref{eqn-n}, \ref{eqn-n1} and \ref{eqn-n2}.

Recall first the equations for $L_1^n$, $H_1^n$ and $F_1^n$ are ($\Omega_*$ denotes the annulus $1-\epsilon\le r\le 1$):
\begin{eqnarray}
    &-\frac{\p^2 L_1^n}{\p r^2}-\frac{1}{r}\frac{\p L_1^n}{\p r} + \frac{n^2}{r^2}L_1^n= f_5(L_1^n,H_1^n,F_1^n) \hspace{2em} &\text{in }\Omega_*,\label{L1n11}\\
    &-\frac{\p^2 H_1^n}{\p r^2}-\frac{1}{r}\frac{\p H_1^n}{\p r} + \frac{n^2}{r^2}H_1^n= f_6(L_1^n,H_1^n,F_1^n) \hspace{2em} &\text{in }\Omega_*,\label{H1n11}\\
    &-\frac{\p^2 F_1^n}{\p r^2}-\frac{1}{r}\frac{\p F_1^n}{\p r} + \frac{ n^2}{r^2}F_1^n = \frac1D\Big(f_7(L_1^n,H_1^n, F_1^n)+\frac{\p F_*}{\p r}\frac{\p p_1^n}{\p r} + \frac{\p F_1^n}{\p r} \frac{\p p_*}{\p r}\Big)
   \hspace{2em} &\text{in }\Omega_*,\label{F1n11}\\
    &\frac{\p L_1^n}{\p r}=\frac{\p H_1^n}{\p r}=\frac{\p F_1^n}{\p r}=0 & r=1,\label{bdy1n11}\\
    &-\frac{\p L_1^n}{\p r}+\beta_1 L_1^n=\Big(\frac{\p^2 L_*}{\p r^2}-\beta_1\frac{\p L_*}{\p r}\Big)\Big|_{r=1-\epsilon} &r=1-\epsilon,\label{bdyL1n11}\\
    &-\frac{\p H_1^n}{\p r}+\beta_1 H_1^n=\Big(\frac{\p^2 H_*}{\p r^2}-\beta_1\frac{\p H_*}{\p r}\Big)\Big|_{r=1-\epsilon} &r=1-\epsilon,\label{bdyH1n11}\\
    &-\frac{\p F_1^n}{\p r}+\beta_2 F_1^n=\Big(\frac{\p^2 F_*}{\p r^2}-\beta_2\frac{\p F_*}{\p r}\Big)\Big|_{r=1-\epsilon} &r=1-\epsilon.\label{bdyF1n11}
\end{eqnarray}
For the right-hand sides of \re{L1n11} -- \re{F1n11}, we can write them as the form $\eta + O(\epsilon)$, and we shall claim that $\eta$ is independent of $n$. In fact, we notice that the $O(1)$ terms of $L_1^n(r)$, $H_1^n(r)$ and $F_1^n(r)$ in \re{L1n} -- \re{F1n} are constants, and are independent of $n$. Moreover, it has been proved in \cite{zhao3} that $\frac{\p F_*}{\p r}, \frac{\p p_*}{\p r} = O(\epsilon)$, and $\frac{\p p_1^n}{\p r}, \frac{\p F_1^n}{\p r}$ are both bounded. Hence the extra two terms in \re{F1n11}, $\frac{\p F_*}{\p r}\frac{\p p_1^n}{\p r}$ and $\frac{\p F_1^n}{\p r}\frac{\p p_*}{\p r}$, do not affect the $O(1)$ term $\eta$.

Using Lemma \ref{eqn-n2}, we find that the difference between $L_1^1(r)$ and $L_r^0(r)$ starts from $O(\epsilon)$ terms. As a matter of fact, by \re{psisol1}, \re{psisol2}, and \re{L1n}, 
\begin{equation*}
    L_1^1 - L_1^0 = -\frac{\epsilon}{\beta_1^2} \Big(\frac{\p^2 L_*}{\p r^2}-\beta_1\frac{\p L_*}{\p r}\Big)\Big|_{r=1-\epsilon} + O(\epsilon^2) = \frac{\epsilon}{\beta_1}\Big( L_*^1 - \frac{\mu}{\lambda}\Big) + O(\epsilon^2).
\end{equation*}
From \re{L11n}, $L_1^1 = \frac{\mu}{\lambda} - L_*^1 + \epsilon L_{11}^1 + O(\epsilon^2)$, and $L_1^0 = \frac{\mu}{\lambda} - L_*^1 + \epsilon L_{11}^0 + O(\epsilon^2)$; combining with the above equation, we further have
\begin{equation}
    \label{DL1n} 
    L_{11}^1 - L_{11}^0 = \frac{1}{\beta_1}\Big(L_*^1 - \frac{\mu}{\lambda}\Big).
\end{equation}
Similarly, we can derive
\begin{gather}
    \label{DH11n} H_{11}^1 - H_{11}^0 = \frac{1}{\beta_1} H_*^1,\\
    \label{DF11n} F_{11}^1 - F_{11}^0 = \frac{1}{\beta_2} F_*^1.
\end{gather}

\bigskip

Next we consider the equation for $p_1^n$:
\begin{eqnarray}
     &&-\frac{\p^2 p_1^n}{\p r^2}-\frac{1}{r}\frac{\p p_1^n}{\p r} + \frac{n^2}{r^2}p_1^n =  f_8(L_1^n,H_1^n,F_1^n) \mm \text{in }\Omega_* ,\label{p1n11}\\
     &&\frac{\p p_1^n}{\p r}(1) = 0,\mm  p_1^n(1-\epsilon) = \frac{1-n^2}{(1-\epsilon)^2},\label{bdyp1n11}
\end{eqnarray}
where
\begin{equation}
    \label{f81}
    f_8(L_1^n, H_1^n, F_1^n) = \frac{1}{M_0}\Big[\lambda\frac{(M_0-F_*)L^n_1}{\gamma+H_*} -\lambda\frac{L_* F^n_1}{\gamma+H_*} -\lambda\frac{(M_0-F_*)L_*H^n_1}{(\gamma+H_*)^2} + (\rho_3-\rho_4)F_1^n\Big].
\end{equation}
In order to apply Lemmas \ref{eqn-n}, \ref{eqn-n3} and \ref{eqn-n4}, we shall rewrite $f_8$ in the form $f_8 = \eta + f(r)$, where $f(r) = O(\epsilon^2)$. More specifically, we denote,
\begin{equation*}
    f_8(L_1^n, H_1^n, F_1^n) = \eta_n + O(\epsilon^2),
\end{equation*}
and we now proceed a long and tedious journal to compute $\eta_n = \eta_n(\epsilon)$. Substituting \re{Ls} -- \re{Fs} and \re{L11n} -- \re{F11n} all into \re{f81}, we have
\begin{equation*}
    \begin{split}
         M_0 f_8 \;=&\; \lambda\frac{(M_0-\epsilon F_*^1)(\frac{\mu}{\lambda}-L_*^1 + \epsilon L_{11}^n)}{\gamma+H_0 + \epsilon H_*^1} - \lambda \frac{(\frac{\rho_3(\gamma+H_0)}{\lambda}+\epsilon L_*^1)(-F_*^1 + \epsilon F_{11}^n)}{\gamma+H_0+\epsilon H_*^1} + (\rho_3-\rho_4)(-F_*^1 + \epsilon F_{11}^n)\\
         &- \lambda \frac{(M_0-\epsilon F_*^1)(\frac{\rho_3(\gamma+H_0)}{\lambda}+\epsilon L_*^1)(-H_*^1 + \epsilon H_{11}^n)}{(\gamma+H_0+\epsilon H_*^1)^2} + O(\epsilon^2)\\
         \;=&\;\Big(\frac{M_0(\mu-\lambda L_*^1)}{\gamma+H_0} + \frac{(-\epsilon F_*^1)(\mu - \lambda L_*^1)}{\gamma+H_0} + \frac{\lambda M_0 \epsilon L_{11}^n}{\gamma +H_0} - \frac{M_0(\mu-\lambda L_*^1)\epsilon H_*^1}{(\gamma+H_0)^2} \Big) - \Big(-\rho_3 F_*^1 - \frac{\lambda \epsilon L_*^1 F_*^1}{\gamma+H_0} \\
         &+\rho_3\epsilon F_{11}^n + \frac{\rho_3 F_*^1\epsilon H_*^1}{\gamma+H_0}\Big) + \Big(-(\rho_3-\rho_4)F_*^1 + (\rho_3-\rho_4)\epsilon F_{11}^n\Big) - \Big(-\frac{M_0\rho_3 H_*^1}{\gamma+H_0} + \frac{\epsilon F_*^1 \rho_3  H_*^1}{\gamma+H_0}\\
         &-\frac{\lambda M_0 \epsilon L_*^1 H_*^1}{(\gamma+H_0)^2} + \frac{M_0\rho_3 \epsilon H_{11}^n}{\gamma+H_0} + \frac{2M_0 \rho_3 H_*^1 \epsilon H_*^1}{(\gamma+H_0)^2}\Big) + O(\epsilon^2)\\
         \;=&\; \Big(\frac{M_0 \mu}{\gamma+H_0} - \frac{M_0(\lambda L_*^1 - \rho_3 H_*^1)}{\gamma+H_0} + \rho_4 F_*^1 \Big) + \epsilon \Big(-\frac{F_*^1 \mu}{\gamma + H_0} - \frac{M_0 H_*^1 \mu}{(\gamma+H_0)^2 } + 2\frac{\lambda L_*^1 F_*^1}{\gamma+ H_0} + 2\frac{\lambda M_0 L_*^1 H_*^1}{(\gamma+ H_0)^2}\\
         &-2\frac{\rho_3 F_*^1 H_*^1}{\gamma+H_0} - 2\frac{M_0\rho_3 (H_*^1)^2}{(\gamma+H_0)^2}\Big) + \epsilon\Big(\frac{\lambda M_0 L_{11}^n}{\gamma+H_0} -\frac{M_0\rho_3 H_{11}^n}{\gamma+H_0} - \rho_4 F_{11}^n \Big) + O(\epsilon^2),
    \end{split}
\end{equation*}
hence
\begin{equation}
    \label{etan}\begin{split}
        \eta_n =&\; \frac{1}{M_0}\Big[\Big(\frac{M_0 \mu}{\gamma+H_0} - \frac{M_0(\lambda L_*^1 - \rho_3 H_*^1)}{\gamma+H_0} + \rho_4 F_*^1 \Big) + \epsilon \Big(-\frac{F_*^1 \mu}{\gamma + H_0} - \frac{M_0 H_*^1 \mu}{(\gamma+H_0)^2 } + 2\frac{\lambda L_*^1 F_*^1}{\gamma+ H_0} \\
         &+ 2\frac{\lambda M_0 L_*^1 H_*^1}{(\gamma+ H_0)^2} -2\frac{\rho_3 F_*^1 H_*^1}{\gamma+H_0} - 2\frac{M_0\rho_3 (H_*^1)^2}{(\gamma+H_0)^2}\Big)\Big] + \frac{\epsilon}{M_0}\Big(\frac{\lambda M_0 L_{11}^n}{\gamma+H_0} -\frac{M_0\rho_3 H_{11}^n}{\gamma+H_0} - \rho_4 F_{11}^n \Big).
    \end{split}
\end{equation}
We notice that all the terms in the first bracket of \re{etan} are independent of $n$. Applying Lemma \ref{eqn-n4}, we immediately have
\begin{equation}
    \label{p1r}
    \frac{\p p_1^n(1-\epsilon)}{\p r} = \epsilon \eta_n + \epsilon^2 \frac{\eta_n}{2} + O(\epsilon^3).
\end{equation}

Now we are ready to prove our main result Theorem \ref{main}. 

\begin{proof}[\textbf{Proof of Theorem \ref{main}}]
What we need to do is to verity the four assumptions of the Crandall-Rabinowitz Theorem (Theorem \ref{CRthm}) at the point $\mu=\mu_1$. We choose the Banach spaces $X=X^{4+\alpha}_1$ and $Y = X^{1+\alpha}_1$. 
The differentiabilty of the map follows the same argument as in the papers \cite{zhao3, angio1, Hongjing,zhao2,song}. To begin with, the assumption (1) is naturally satisfied, and the assumption (4) can be justified in the same way as in \cite{zhao3}. For the remaining assumptions (2) and (3), we need to prove
\begin{equation}\label{obj}
    \mu_n \neq \mu_1 \mm \text{for }\forall n\neq 1. 
\end{equation}
From \cite{zhao3}, it has been established that there exists a bound $E_1>0$, when $0<\epsilon < E_1$, we have $\mu_n \neq \mu_1$ for each $n \ge 2$. Hence it suffices to show
\begin{equation}
    \label{1n0}
    \mu_0 \neq \mu_1,
\end{equation}
and we shall prove it by contradiction. 

Recall that $\mu_n$ is the solution to the equation 
\begin{equation*}
    \frac{\p^2 p_*(1-\epsilon)}{\p r^2} + \frac{\p p_1^n(1-\epsilon)}{\p r} = 0.
\end{equation*}
For the contrary, assuming $\mu_0 = \mu_1$, we then have 
\begin{equation}
\label{contrad}
    \frac{\p p_1^1(1-\epsilon)}{\p r} - \frac{\p p_1^0(1-\epsilon)}{\p r} = 0.
\end{equation}
On the other hand, it follows from \re{p1r} and \re{etan} that
\begin{equation*}
\begin{split}
    \frac{\p p_1^1(1-\epsilon)}{\p r} - \frac{\p p_1^0(1-\epsilon)}{\p r}  \;&=\; \epsilon (\eta_0 - \eta_1) + \frac{\epsilon^2}{2}(\eta_0 - \eta_1) + O(\epsilon^3)\\
    &=\; \frac{\epsilon^2}{M_0}\Big[\frac{\lambda M_0}{\gamma+H_0}(L_{11}^1 -L_{11}^0) -\frac{\rho_3 M_0}{\gamma+H_0}(H_{11}^1 - H_{11}^0) - \rho_4(F_{11}^1 - F_{11}^0) \Big] + O(\epsilon^3).
    \end{split}
\end{equation*}
Substituting into the estimates for $L_{11}^1 - L_{11}^0$, $H_{11}^1 - H_{11}^0$, and $F_{11}^1 - F_{11}^0$ in \re{DL1n} -- \re{DF11n}, recalling also \re{rho41} in Lemma \ref{rho-relation} and the fact that $\mu_0,\mu_1 = O(\epsilon)$ by \re{bifurpt}, we further have
\begin{equation*}
    \begin{split}
        \frac{\p p_1^1(1-\epsilon)}{\p r} - \frac{\p p_1^0(1-\epsilon)}{\p r}  \;&=\; \epsilon^2\Big[\frac{\lambda}{\gamma+H_0}\frac{1}{\beta_1}(L_*^1 - \frac{\mu_0}{\lambda}) - \frac{\rho_3}{\gamma+H_0}\frac{H_*^1}{\beta_1} - \frac{\rho_4}{M_0}\frac{F_*^1}{\beta_2}\Big] + O(\epsilon^3)\\
        &=\; \epsilon^2 \Big[\frac{1}{\beta_1}\frac{\lambda L_*^1 - \rho_3 H_*^1}{\gamma+H_0} - \frac{1}{\beta_2}\frac{\rho_4 F_*^1}{M_0} - \frac{\mu_0}{\beta_1(\gamma+H_0)}\Big] + O(\epsilon^3)\\
        &=\; \epsilon^2 \Big(\frac{1}{\beta_1} - \frac{1}{\beta_2}\Big)\frac{\rho_4 F_*^1}{M_0} + O(\epsilon^3).
    \end{split}
\end{equation*}
We have assumed that $\beta_1\neq\beta_2$. Since the sign of $\frac{\p p_1^1(1-\epsilon)}{\p r} - \frac{\p p_1^0(1-\epsilon)}{\p r}$ is dominated by $\Big(\frac{1}{\beta_1} - \frac{1}{\beta_2}\Big)\frac{\rho_4 F_*^1}{M_0}$, we can easily find $E_2 > 0$, such that when $0<\epsilon < E_2$, 
$$\frac{\p p_1^1(1-\epsilon)}{\p r} - \frac{\p p_1^0(1-\epsilon)}{\p r} \neq 0,$$
which contradicts with the statement \re{contrad}. Hence we have $\mu_1 \neq \mu_0$ when $\epsilon < E_2$.

By taking $E = \min\{E_1, E_2\}$, we finish showing \re{obj}. With \re{obj}, we now have
\beaa 
 && \text{Ker} \,\mathcal{F}_{\tilde{R}}(0,\mu_1) = \text{span}\{\cos(\theta)\},\\
 && Y_1 = \text{Im}\,\mathcal{F}_{\tilde{R}}(0,\mu_1)= \text{span}\{1,\cos(2\theta),\cos(3\theta), \cdots, \cos(n\theta),\cdots\}, \\
 && Y_1 \mbox{$\bigoplus$} \text{Ker}\,\mathcal{F}_{\tilde{R}}(0,\mu_1)  = Y,\\
 && \left[\mathcal{F}_{\mu \tilde{R}}(0,\mu_1)\right] \in \text{span}\{\cos(\theta)\}, \text{ and hence } \left[\mathcal{F}_{\mu \tilde{R}}(0,\mu_1)\right] \cos(\theta) \not\in Y_1.
\eeaa
In other words, all the spaces (kernel space, codimension space, non-tangential space) meet the requirements of the Crandall-Rabinowitz Theorem, and all the assumptions in the Crandall-Rabinowitz Theorem are satisfied. Therefore, $\mu = \mu_1$ is a bifurcation point for the system \re{p1} -- \re{p13}.
\end{proof}

\bigskip

\end{document}